\def\BigRoman{\uppercase\expandafter{\romannumeral\number\count 255 }}
\def\Romannumeral{\afterassignment\BigRoman\count255=}
\newcommand{\ls}{\lesssim}
\newcommand{\la}{\langle}
\newcommand{\ra}{\rangle}
\newcommand{\R}{\mathbb{R}}
\newcommand{\C}{\mathbb{C}}
\newcommand{\Z}{\mathbb{Z}}
\newcommand{\N}{\mathbb{N}}
\DeclareFontFamily{U}{mathx}{\hyphenchar\font45}
\DeclareFontShape{U}{mathx}{m}{n}{ <5> <6> <7> <8> <9> <10> <10.95>
  <12> <14.4> <17.28> <20.74> <24.88> mathx10 }{}
\DeclareSymbolFont{mathx}{U}{mathx}{m}{n}
\DeclareMathAccent{\widecheck}{0}{mathx}{"71}
\newcommand{\eps}{\epsilon} 
\DeclareMathOperator{\Real}{Re}
\DeclareMathOperator{\Imag}{Im}
\newtheorem{thm}{Theorem} \newtheorem{cor}[thm]{Corollary}
\newtheorem{pro}[thm]{Proposition} \newtheorem{lem}[thm]{Lemma}
\theoremstyle{remark} \newtheorem{rmk}[thm]{Remark}
\theoremstyle{definition} \newtheorem{dfn}[thm]{Definition}
\numberwithin{equation}{section} \numberwithin{thm}{section}
\begin{document}
\title[Scattering results for Dirac Hartree-type
equations]{Scattering results for
  Dirac Hartree-type equations \\ with small initial data}

\author[C.~Yang]{Changhun Yang} \address[C.~Yang]{Department of
  Mathematical Sciences, Seoul National University, Seoul 151-747,
  Republic of Korea} \email{maticionych@snu.ac.kr}

\begin{abstract}
We consider the Dirac equation with cubic Hartree-type nonlinearity derived by uncoupling the Dirac-Klein-Gordon systems. We prove small data scattering result in full subcritical range. Main ingredients of the proof are the localized Strichartz estimates, improved bilinear estimates thanks to null-structure hidden in Dirac operator and $Up,Vp$ function spaces. We apply the projection operator and get a system which of linear part is the Klein-Gordon type. It enables us to exploit the null-structures in equation. This result is shown to be almost optimal by showing that iteration method based on Duhamel's formula over supercritical range fails. 
\end{abstract}
\subjclass[2010]{Primary:35Q55 ; Secondary:35Q40 } \thanks{ }
\maketitle

\section{Introduction}\label{sect:intro}
We consider the following Hartree type Dirac equation
\begin{equation}\label{maineqn}
\begin{split}
(-i\partial_t+\alpha\cdot D+m\beta)\psi&=\lambda (V*\la\psi,\beta\psi\ra_{\C^4})\beta\psi, \\
\psi(0,\cdot)&=\psi_0\in H^{s}(\R^3) ( \text{ or }  \dot H^s(\R^3)),
\end{split}
\end{equation}
where $D=-i\nabla, \psi:\R^3\rightarrow \C^4$ is the Dirac spinor regarded as a column vector and $\lambda\in \R$.  $\beta$ and $\mathbf{\alpha}=(\alpha_1,\alpha_2,\alpha_3)$ are the $4\times4$ Dirac matrices given by \begin{equation}\label{matrix}
\beta=\begin{pmatrix}
I_2 & 0 \\
0 & -I_2
\end{pmatrix} , \qquad 
\alpha^j=\begin{pmatrix}
0 & \sigma^j \\
\sigma^j & 0 
\end{pmatrix}
\end{equation}
where for $j=1,2,3$ the Pauli matrices $\sigma^j$ are 
\begin{equation}
\sigma_1=\begin{pmatrix} 0&1\\1&0 \end{pmatrix}, \\
\sigma_2=\begin{pmatrix} 0&-i\\i&0 \end{pmatrix}, \\
\sigma_3=\begin{pmatrix} 1&0\\0&-1 \end{pmatrix}.
\end{equation}
The constant $m\ge0$ is a physical mass parameter and the symbol $*$ denotes convolution in $\R^3$. In this paper, we consider a generalized potential $V$ which is defined as follows:
\begin{dfn}
	For $0\le \gamma \le 2$ the potential $V$ satisfies the growth condition such that $\widehat{V} \in C^{4}(\mathbb R^3 \setminus\{0\})$ and  for $ 0 \le k \le 4$ \begin{align}\label{growth}
	|{\nabla^k}\widehat V(\xi)| \lesssim |\xi|^{-\gamma-k}\;\;\mbox{for}\;\; |\xi| \le 1, 
	\quad |{\nabla^k}\widehat V(\xi)| \lesssim |\xi|^{-2-k}\;\;\mbox{for}\;\;|\xi| > 1.
	\end{align}
\end{dfn}
The Yukawa potential $V(x)=e^{-\mu_0 |x|} |x|^{-1}, \ \mu_0>0$ is  corresponding to $\gamma=0$.
And the Coulomb potential $V(x)=|x|^{-1}$ is of such type corresponding to $\gamma=2$.
The equation \eqref{maineqn} for $\gamma=0,2$ is derived by uncoupling the following Dirac-Klein-Gordon system. 
\begin{equation}\label{eqn:DKG}
\begin{cases}
(-i\partial_t+\alpha\cdot D+ m\beta)\psi=\phi\beta\psi\\
(\partial_t^2-\Delta+M^2)\phi=\la\psi,\beta\psi\ra_{\C^4}.
\end{cases}
\end{equation}
Suppose that a scalar field $\phi$ is standing wave, i.e. $\phi(t,x)=e^{i\lambda t}\rho(x)$. Then Klein-Gordon part of \eqref{eqn:DKG} becomes
\begin{equation*}
(-\Delta-\lambda^2+M^2)\phi=\la\psi,\beta\psi\ra_{\C^4}.
\end{equation*}
Solving this equation gives
\begin{equation*}
\phi= \begin{cases}
c_1\frac{1}{4\pi|x|}*\la\psi,\beta\psi\ra_{\C^4} , \ &\text{if} \ \lambda=\pm M, \\
c_{2} \frac{e^{-\sqrt{M^2-\lambda^2}}}{|x|} *\la\psi,\beta\psi\ra_{\C^4} , \ &\text{if} \ M >|\lambda|,
\end{cases}
\end{equation*}
for some  constant $c_1$ and $c_2$. 
Putting this into the Dirac part of \eqref{eqn:DKG}, a spinor $\psi$ result in \eqref{maineqn} with potential $V$ for $\gamma=2$ and $\gamma=0$ respectively. 
We generalize these two potentials obtained from the concrete physical model into $V$ in mathematical view point.

We investigate the global behavior of solution to \eqref{maineqn}, especially scattering problem when the initial data is sufficiently small.
The scaling argument for the massless$(m=0)$ Dirac equation with Coulomb potential gives if $\psi$ is a solution of \eqref{maineqn}, then so is $\psi_{a}(t,x)=a^{\frac{3}{2}}\psi(at,ax)$, hence the scale invariant data space is $\psi_0\in L^2(\R^3)$.
Solutions to \eqref{maineqn} also satisfy conservation of charge $\|\psi(t)\|_{L_x^2}=\|\psi(0)\|_{L_x^2}$. Thus, the equation \eqref{maineqn} is charge critical. Our goal is to show the scattering result in full subcritical range and ill-posedness in supercritical range.

The previous researches have been established with the following equation which consists of the same linear part but different nonlinear term
\begin{equation}\label{subeqn}
\begin{split}
(-i\partial_t+\alpha\cdot D+m\beta)\psi&=\lambda (\frac{e^{-\mu|x|}}{|x|}*|\psi|^2)\psi, \ \mu\ge 0 \\
\psi(0,\cdot)&=\psi_0\in H^{s}(\R^3).
\end{split}
\end{equation}
More precisely, difference from \eqref{maineqn} is the way to define conjugation in inner product:
\begin{align*}
\la \psi,\psi \ra_{\C^4}&=|\psi_1|^2+|\psi_2|^2+|\psi_3|^2+|\psi_4|^2, \\
\la \psi,\beta\psi \ra_{\C^4}&=|\psi_1|^2+|\psi_2|^2-|\psi_3|^2-|\psi_4|^2.
\end{align*}
If $V$ is Coulomb type, \eqref{subeqn} appears when Maxwell-Dirac system with zero magnetic field is uncoupled \cite{CG-76}. 
On the other hand, concerning the Yukawa type it is conjectured in \cite{CG-76} that as Maxwell-Dirac case, the equation might be derived by uncoupling Dirac-Klein-Gordon system.
Concerning the well-posedness and scattering, the first well-posedness result was obtained in \cite{DF1989} when $V$ is Yukawa type. They showed existence of weak solution of \eqref{maineqn} when mass is zero(m=0). Recently, Herr and Tesfahun \cite{HT-15} established the small data scattering result for $\psi_0\in H^s$ for $s>\frac12$ when mass is nonzero.
The same results also valid for \eqref{maineqn}, since in the above results they didn't use any structure of Dirac equation.
In the paper \cite{HT-15} it is suggested that the regularity threshold can be lowered if the null structure of Dirac equation is taken account into, which initiate our interest about this research.
One of our main tool is to apply the null structure introduced in \cite{AFS-2007} but this does work only with the help of $\beta$ in nonlinear term, that is, in the case of \eqref{maineqn}. So from now on we focus on the equation \eqref{maineqn}.
There is also many results for Dirac equation with other nonlinear term. 
Dirac equation with a nonlinear term $\big(|x|^{-\gamma}*|\psi|^{p-1} \big)\psi$ for $0<\gamma<3$ and $p\ge3$ was intensively studied in \cite{MT2009}, \cite{NT-12}.
Finally let us end this paragraph with referring \cite{BH-2015} where the small data scattering result in scaling critical spaces was shown for Dirac equation with a cubic nonlinear term $\la \psi,\beta\psi \ra_{\C^4}\beta\psi$. See also the references therein.

Let us write the solution of \eqref{maineqn} using Duhamel's formula by
\begin{equation}\label{eqn:duhamel}
\psi(t)=U_m(t,D)\psi_0+\int_{0}^{t}U_m(t-\tau,D)(V*\la \psi,\beta\psi\ra)(\tau)\beta\psi(\tau)d\tau,
\end{equation}
where the linear propagator is defined on $L^2(\R^3;\C^4)$ by
\begin{equation}\label{Def:U}
U_m(t,D)=I\cos t(m^2-\Delta)^\frac12 - (\alpha+i\beta)(m^2-\Delta)^{-\frac12}\sin t(m^2-\Delta)^\frac12.
\end{equation}
One can check that the essential parts of linear propagator $U_m(t)$ is $e^{\pm it(m^2-\Delta)^\frac12}$. So one infer that some of well-posedness and scattering results for Semi-relativistic equation with Hartree Coulomb type nonlinearity would be also valid for Dirac equation. 
\begin{equation*}
i\partial_t u=\sqrt{1-\Delta}u+\lambda (|x|^{-1}*|u|^2)u,
\end{equation*}
for $u:\R^3\rightarrow\C, \ \lambda\in\R$.
The first result on the local well-posedness was shown in \cite{HL-14} for $s>\frac14$ (and $s>0$ if the initial data is radially symmetric).  Concerning the scattering, there is a negative result by \cite{CT-2006} and a modified scattering result by \cite{P-14}. We expect that similar result might be drawn for \eqref{maineqn} with $\gamma=2$ but we did not pursue here.

Now let us state the first main theorem. We use the notation $H_m^s(\R^3)$ which means homogeneous(or Inhomogeneous) Sobolev spaces $\dot H^s(\R^s)$(or $H^s(\R^3)$) if $m=0$ (or $m>0$), respectively.
\begin{thm}[Scattering results]\label{thm:scattering}
	Let $m\ge0$ and $\gamma$ be such that $$\begin{cases} 0\le \gamma<1 , &\text{ if } m>0 \\
	0\le\gamma<2 , &\text { if } m=0.
	\end{cases}$$ 
	Then for $s>0$ there exists $\delta>0$ such that for all $\psi\in H_m^s(\R^3)$ satisfying $$\|\psi_0\|_{H_m^s(\R^3)}\le \delta,$$ there exist a global solution $\psi\in C(\R,H_m^s(\R^3))$ to the Dirac equation \eqref{maineqn}. Furthermore the solutions scatter in $H_m^s(\R^3)$ in the sense that there exist $\phi\in H_m^s(\R^3)$ such that 
	\begin{align*}
	\|\psi(t)-U_m(t,D)\phi\|_{H_m^s(\R^3)} \rightarrow 0
	\end{align*}
	as time $t$ goes to infinity.
\end{thm}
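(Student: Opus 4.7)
The plan is to reduce \eqref{maineqn} to a pair of uncoupled half-wave equations via the spectral projectors of the Dirac operator. Set $\la D\ra_m=(m^2-\Delta)^{1/2}$ and
$\Pi_\pm(D)=\tfrac12\bigl(I\pm\la D\ra_m^{-1}(\alpha\cdot D+m\beta)\bigr)$,
so that $U_m(t,D)=e^{-it\la D\ra_m}\Pi_+(D)+e^{it\la D\ra_m}\Pi_-(D)$. Decomposing $\psi=\psi_++\psi_-$ with $\psi_\pm=\Pi_\pm(D)\psi$, each component solves $(-i\partial_t\pm\la D\ra_m)\psi_\pm=\lambda\,\Pi_\pm(D)\bigl[(V*\la\psi,\beta\psi\ra_{\C^4})\beta\psi\bigr]$. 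Reading the resulting system through the projectors unlocks the Dirac null structure of~\cite{AFS-2007}, whose symbol lives precisely in the cross terms $\Pi_{\pm_1}^*\beta\,\Pi_{\pm_2}$.

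Next, I would set up a resolution space built on the $U^p/V^p$ calculus adapted to the half-wave propagators. Concretely, define $\|\psi\|_{X^s}=\|\psi_+\|_{U^2_+H^s}+\|\psi_-\|_{U^2_-H^s}$ with dual norm $\|F\|_{DU^2_\pm H^s}$ on the Duhamel side. The transfer principle then reduces Theorem~\ref{thm:scattering} (both the global existence and the existence of the asymptotic state $\phi$) to a single trilinear estimate
$\bigl\|\Pi_\pm\bigl[(V*\la\psi_1,\beta\psi_2\ra_{\C^4})\beta\psi_3\bigr]\bigr\|_{DU^2_\pm H^s}\ls\prod_{j=1}^3\|\psi_j\|_{X^s}$
for $s>0$, after which a standard contraction argument on a ball of radius $\sim\delta$ in $X^s$ yields the solution and scattering follows from the fact that $U^2_\pm$-functions admit limits as $t\to\infty$.

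The heart of the matter is the bilinear control of $\la\psi_1,\beta\psi_2\ra_{\C^4}$ weighted by $|\xi_1+\xi_2|^{-\gamma}$ in $L^2_{t,x}$. After Littlewood-Paley and angular localization, the same-sign interaction $\pm_1=\pm_2$ enjoys the identity $\Pi_{\pm_1}(\xi_1)^*\beta\,\Pi_{\pm_2}(\xi_2)=O\bigl(\angle(\pm_1\xi_1,\pm_2\xi_2)+m\la\xi_1\ra_m^{-1}+m\la\xi_2\ra_m^{-1}\bigr)$, which is the Dirac analogue of the classical null form $Q_{12}$. The angular gain combines with localized Strichartz estimates for $e^{\mp it\la D\ra_m}$ (and the angular orthogonality familiar from the Klainerman-Machedon / Klein-Gordon setting) to produce the needed $L^2_{t,x}$ bilinear bound with the extra $|\xi_1+\xi_2|^{-\gamma}$-factor absorbed. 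The opposite-sign interaction $\pm_1\neq\pm_2$ has no null cancellation but is modulation-nonresonant: $\la\xi_1\ra_m+\la\xi_2\ra_m\gs\max(\la\xi_1\ra_m,\la\xi_2\ra_m)$, so it can be handled by the $V^2$ component of the resolution space.

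The main obstacle is the low-frequency singularity of $\widehat V$, which behaves like $|\xi|^{-\gamma}$ at the origin. When $\la\psi_1,\beta\psi_2\ra_{\C^4}$ is supported at output frequency $L\ll N\sim|\xi_1|\sim|\xi_2|$, one must extract enough gain from the null structure and from the low-frequency Strichartz bound to beat $L^{-\gamma}$. In the massless case $m=0$ the scaling symmetry yields a gain essentially of order $(L/N)^{1/2}$ uniformly down to $L=0$, which closes the estimate all the way up to $\gamma<2$; in the massive case the propagator degenerates to a Schr\"odinger-type one near the origin and only half as much low-frequency gain is available, restricting the range to $\gamma<1$. Carrying out the dyadic-angular summation cleanly at the endpoint $s>0$, and matching the combinatorics between the angular cap size, the frequency ratio, and the potential's singularity order $\gamma$, is the most delicate technical step.
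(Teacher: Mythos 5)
Your proposal matches the paper's overall strategy closely: decompose via $\Pi_\pm^m(D)$ to diagonalize the linear operator, set up $U^2/V^2$-based resolution spaces, reduce Theorem~\ref{thm:scattering} to the trilinear estimate \eqref{eq:nonlin-est}, exploit the Dirac null form $\Pi_{\pm}^m(\xi)^*\beta\Pi_{\pm}^m(\eta)=\mathcal O(\angle)+\mathcal O(\la\cdot\ra_m^{-1})$ together with angularly localized Strichartz estimates for the same-sign interaction, and close by contraction; your diagnosis of the $\gamma<2$ (massless) vs.\ $\gamma<1$ (massive) dichotomy as coming from the weaker low-frequency gain $2^{k/2}$ in the Klein--Gordon regime also matches the paper's Corollary~\ref{Cor:bilinear2}. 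The one genuine divergence is your treatment of the opposite-sign interaction $\theta_1\theta_2=-1$: you propose to exploit modulation non-resonance directly through the $V^2$ structure, whereas the paper instead proves a Klainerman--Machedon-type bilinear $L^2_{t,x}$ estimate for free Klein--Gordon waves (Lemma~\ref{Lem:bilinear for free}, via cube decomposition, a change of variables, and a Jacobian lower bound measuring transversality), transfers it by the $U^2$ atomic structure, and uses the conjugation identity $e^{\pm it\la D\ra}\psi=\overline{e^{\mp it\la D\ra}\bar\psi}$ to convert the opposite-sign case into a same-direction bilinear estimate (Proposition~\ref{Prop;V2} and its proof in the Appendix). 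Both mechanisms exploit the same underlying transversality of the two characteristic cones, but the paper's route yields the clean $2^k$ gain directly in $V^2$; if you pursued the modulation route you would still need to recover that $2^k$ bound (not merely some positive power of $2^{-k_1}$) to make the dyadic sums in Section~4 close against the $2^{-\gamma k}$ singularity of $\widehat V$, so the plan is sound but the quantitative step is where the work lies.
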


The proof of main theorem is based on perturbation method, that is, we employ the standard contraction mapping argument and show that the effect of nonlinear term is negligible if an initial data is sufficiently small. 
Main tasks are to find a appropriate function spaces and estimate the nonlinear terms in it to occur contraction using Duhamel's formula. We construct a function spaces by employing the $U^2$ spaces with the dyadic decomposition. And we prove localized linear and bilinear estimates as in \cite{HT-15}.  
To overcome bad behavior from the potential near zero, we especially use the localized bilinear estimates is effectively. To utilize bilinear estimates we adapt the theory of $U^2$ spaces. 
In contrast to the previous results, by making use of null structure we enable to obtain an improved bilinear estimates and prove the scattering result in full subcritical range.

When we estimate the Hartree type nonlinearity term using dyadic decomposition, the most difficult part is to bound the high-high-low interaction because of the singularity from the potential near zero. 
The larger $\gamma$, more worse the singularity near zero in frequency side. 
So our result cannot be obtained in the full range of $\gamma$. 
For massless case $m=0$, we cannot cover the Coulomb potential $\gamma=2$. 
For massive case $m>0$, the null structure is not applicable in the low frequency part and the Strichartz estimates localized to this region is also not so satisfactory to get over the singularity, which makes us to get more restricted results than massless case.  For $1\le\gamma<2$ we might overcome this difficulty and obtain the scattering result if we choose an initial data in radial symmetry spaces or weighted function spaces. We will consider it in future work.

In supercritical range $s<0$ we will provide the ill-posedness result. Thus our result is almost sharp in respect of regularity. We give an additional assumption on the potential $V$ that $\widehat{V}$ is positive. It seems reasonable, since the Coulomb and Yukawa still are under the consideration.
\begin{thm}
Let $s<0$ and $T>0$. We further assume that $\widehat V$ is positive. Then the flow map $\psi_0\mapsto \psi$ from $H^s(\R^3)$ to $C([0,T];H^s(\R^3))$ cannot be $C^3$ at the origin.
\end{thm}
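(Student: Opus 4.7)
The plan is to obstruct $C^3$-smoothness of the solution map by the standard Bourgain / Molinet--Saut--Tzvetkov device: if the map were $C^3$ at $0$, its third Fr\'echet derivative would be a bounded trilinear form on $H^s$, but one can force this quantity to blow up along a concrete high-frequency sequence. Suppose for contradiction that $\psi_0 \mapsto \psi$ is $C^3$ at the origin from $H^s(\R^3)$ into $C([0,T]; H^s(\R^3))$. Plugging $\psi_0 = \varepsilon f$ into \eqref{eqn:duhamel} and extracting the $\varepsilon^3$ coefficient of the resulting Picard expansion produces the cubic operator
\[
\Psi_3[f](t) := -i\lambda \int_0^t U_m(t-\tau, D) \bigl[(V * \la U_m(\tau,D) f, \beta U_m(\tau,D) f \ra)\, \beta U_m(\tau,D) f\bigr]\, d\tau,
\]
and the $C^3$ assumption forces $\|\Psi_3[f](t)\|_{H^s} \lesssim \|f\|_{H^s}^3$ uniformly on $[0,T]$. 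I will exhibit a family $(f_N)_{N\gg1}$ with $\|f_N\|_{H^s} \simeq 1$ whose images satisfy $\|\Psi_3[f_N](t_0)\|_{H^s} \to \infty$ at a fixed $t_0 \in (0,T]$.

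\emph{Test sequence.} Fix $\xi_0 := Ne_1$ with $N \gg 1$ and let $\Pi_+(\xi) := \tfrac{1}{2}\bigl(I + (\alpha\cdot\xi + m\beta)/\sqrt{|\xi|^2+m^2}\bigr)$ be the positive-energy projector. Choose a unit $v_0\in\C^4$ satisfying $\Pi_+(\xi_0)v_0 = v_0$ and $v_0^* \beta v_0 \neq 0$ (a generic positive-energy spinor), and set
\[
\widehat{f_N}(\xi) := N^{-s}\, \chi_{B(\xi_0, 1/4)}(\xi)\, \Pi_+(\xi)\, v_0.
\]
Then $\|f_N\|_{H^s}\simeq 1$ uniformly in $N$. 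Moreover, $f_N$ lies in the range of $\Pi_+(D)$, hence in Fourier space $U_m(\tau,D) f_N(\xi) = e^{-i\tau\la\xi\ra_m}\widehat{f_N}(\xi)$ where $\la\zeta\ra_m := \sqrt{|\zeta|^2+m^2}$; this eliminates the sign ambiguity in $U_m$ and keeps the three internal propagators on the same branch.

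\emph{Resonance and lower bound.} Writing $\xi_3 = \xi + \xi_1 - \xi_2$, a direct Fourier calculation gives
\[
\widehat{\Psi_3[f_N]}(t,\xi) = -i\lambda\, e^{-it\la\xi\ra_m} \iint \widehat V(\xi_2-\xi_1)\, M(\xi_1,\xi_2,\xi_3)\, I_t(\Phi)\, d\xi_1\, d\xi_2,
\]
with spinor factor $M := N^{-3s}\bigl(\Pi_+(\xi_1)v_0\bigr)^*\beta\bigl(\Pi_+(\xi_2)v_0\bigr)\, \beta\,\Pi_+(\xi_3)v_0$, time factor $I_t(\Phi) := \int_0^t e^{i\tau\Phi}\,d\tau$, and phase
\[
\Phi = \la\xi\ra_m + \la\xi_1\ra_m - \la\xi_2\ra_m - \la\xi_3\ra_m.
\]
On the support of the integrand all four frequencies lie in $B(\xi_0, 1)$, and a second-order Taylor expansion of $\la\cdot\ra_m$ around $\xi_0$ combined with the momentum constraint $\xi+\xi_1 = \xi_2+\xi_3$ gives $|\Phi| \lesssim 1/N$; hence $I_t(\Phi)\to t$ uniformly as $N \to \infty$. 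Restricting the domain to the subregion $\tfrac18 \le |\xi_2-\xi_1| \le \tfrac14$, the positivity of $\widehat V$ together with its continuity on $\R^3\setminus\{0\}$ gives $\widehat V(\xi_2-\xi_1) \ge c_0 > 0$ uniformly, while continuity of $\Pi_+$ away from the origin keeps $M$ within $o(1)$ of its reference value $N^{-3s}(v_0^*\beta v_0)\beta v_0\neq 0$. Selecting a coordinate of $\C^4$ in which this reference vector is nonzero, we conclude that, for $\xi$ ranging over a subset of $B(\xi_0, 1)$ of measure $\gtrsim 1$,
\[
|\widehat{\Psi_3[f_N]}(t_0,\xi)| \gtrsim t_0\, N^{-3s}.
\]
Since $\Psi_3[f_N](t_0)$ is Fourier-supported in $B(\xi_0, 1)$, we obtain $\|\Psi_3[f_N](t_0)\|_{H^s} \simeq N^s\,\|\Psi_3[f_N](t_0)\|_{L^2} \gtrsim t_0\, N^{-2s}$. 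For $s<0$ the right-hand side diverges as $N\to\infty$ while $\|f_N\|_{H^s}^3\simeq 1$, contradicting the trilinear bound from Step~1.

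\emph{Main obstacle.} The delicate step is the non-cancellation check for the spinor factor $M$: it reduces to the pointwise algebraic statement $v_0^*\beta v_0 \neq 0$ for a generic positive-energy spinor $v_0$ (a short explicit computation using the form of $\Pi_+(\xi_0)$), which is then transferred to a neighborhood by continuity of $\Pi_\pm$ on $\R^3\setminus\{0\}$. The positivity of $\widehat V$ is used precisely to prevent cancellation of the Hartree kernel across the integration domain; without it the product $\widehat V(\xi_2-\xi_1) M$ could change sign in $\xi_2-\xi_1$ and the lower bound would fail.
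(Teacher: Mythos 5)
Your overall strategy (obstruct $C^3$ via the third Picard iterate, Molinet--Saut--Tzvetkov style) is the same as the paper's, but your construction of the test sequence runs into a genuine obstruction coming from precisely the null structure that the paper exploits elsewhere.

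The problem is the "short explicit computation" you defer. For a unit vector $v_0$ in the range of $\Pi_+(\xi_0)$ one has, from $(\xi_0\cdot\alpha + m\beta)v_0 = \la\xi_0\ra_m v_0$ and the anticommutation $\alpha^j\beta = -\beta\alpha^j$, the exact identity
\[
v_0^*\beta v_0 = \frac{m}{\la\xi_0\ra_m}.
\]
Thus for $m=0$ the reference value is identically zero, and for $m>0$ it decays like $m/N$ as $|\xi_0|=N\to\infty$; it is not bounded away from zero. More generally, writing $\beta\Pi_+(\xi_2) = \Pi_-(\xi_2)\beta + \tfrac{m}{\la\xi_2\ra_m}I$ and using Lemma~\ref{lem:PiPi}, one gets $\Pi_+(\xi_1)\beta\Pi_+(\xi_2) = O(\angle(\xi_1,\xi_2)) + O(\la\xi_1\ra_m^{-1}+\la\xi_2\ra_m^{-1})$, and on $B(\xi_0,1)$ with $|\xi_0|=N$ all angles are $O(1/N)$; so the scalar $(\Pi_+(\xi_1)v_0)^*\beta(\Pi_+(\xi_2)v_0)$ is $O(N^{-1})$, not $O(1)$. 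This is exactly the null-form gain on which Proposition~3.7 rests in the positive direction, and here it works against you. With the corrected size $|M|\ls N^{-3s}\cdot N^{-1}$, your lower bound becomes $\|\Psi_3[f_N](t_0)\|_{H^s}\gs t_0 N^{-2s-1}$, which diverges only for $s<-\tfrac12$; the theorem for $-\tfrac12\le s<0$ is not reached.

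The paper's construction sidesteps this in two coupled ways that are worth noting. First, the data $\widehat\psi = (\chi_{W_\lambda},0,0,0)$ is supported on a full annulus, so that interacting frequencies are not nearly collinear and moreover the spinor is not in the range of any single $\Pi_\pm$; the lower bound Lemma~\ref{Lem:lowerbound} lives at the level of the raw $(1,1)$ entry of $U_m(\tau,\xi)\beta U_m(t,\eta)$ and is driven by the $\cos(\tau\la\xi\ra_m)\cos(t\la\eta\ra_m)$ term, which has no null cancellation. Second, because all four $\pm$-branches are kept, one cannot rely on a stationary-phase resonance at a fixed time; instead the paper works on the short time scale $t\sim\eps\lambda^{-1}$, where every oscillatory phase is $O(\eps)$ and the integrand is uniformly bounded below, yielding $|N(t,\xi)|\gs t\lambda^4$ on a set of $\xi$-measure $\sim\lambda^3$. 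To salvage your approach you would need data mixing $\Pi_+$ and $\Pi_-$ components together with the short-time trick, at which point you essentially recover the paper's argument.
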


The critical case $s=0$ remains open. In future work, we will attack the critical case with some additional angular regularity assumption.

\begin{rmk}
	Wile writing on this paper, the author have learned that similar scattering result for \eqref{maineqn} with $\gamma=0$ was independently proved by Tesfahun \cite{T-2018}. 
\end{rmk}

\subsection{Notation}\label{subsec:not}
Let $\rho\in C_c^\infty(-2,2)$ be real-valued and even function satisfying $\rho(s)=1$ for
$|s|\leq 1$. For $\varphi(\xi):=\rho(|\xi|)-\rho(2|\xi|)$ define
$\varphi_k=\varphi(2^{-k}\xi)$. Then,
$\sum_{k \in \Z}\varphi_k =1$ on $\R^3\setminus\{0\}$ and it
is locally finite.  We define the (spatial) Fourier localization
operator $P_k f=\mathcal{F}^{-1}(\varphi_k \mathcal{F}f)$.
Further, we define
$\varphi_{\leq k}=\sum_{k' \in \Z: k' \le k  }\varphi_{k'}$
and
$P_{\leq k} f=\mathcal{F}^{-1}(\varphi_{\leq k}
\mathcal{F}f)$,
$P_{>k} f=f-P_{\leq k}f $.  Let
$\widetilde{\varphi}_{k}=\varphi_{k-1}+\varphi_{k}+\varphi_{k+1}$
and
$\widetilde{P}_k f=\mathcal{F}^{-1}(\widetilde{\varphi}_k
\mathcal{F}f)$.  Then $\widetilde{P}_k P_k=P_k\widetilde{P}_k=P_k$.

Next we define the Fourier localization operator $P_k^m$ depending on mass $m\ge0$ because our linear propagator shows different behaviors in the low frequency part whether the mass $m$ is zero or not. When we consider the case $m=0$, $P_k^m f=P_kf$ for all $k\in\Z$. On the other hand, if $m>0$ the operator depends on the range of $k$;
\begin{align*}
P_k^m f=\begin{cases}
0 &\text{ if } k<0, \\
P_{\le 0}f  &\text{ if } k=0, \\
P_kf  &\text{ if } k>0.
\end{cases}
\end{align*}

Now, we further make a decomposition involving the angular variable as described in \cite[Chapter$\Romannumeral 9$, Section4 ]{Steinbook}. For each $l\in \N$ we consider a equally spaced set of points with grid length $2^{-l}$ on the unit sphere $\mathbb{S}^2$, that is we fix a collection $\Omega_l:=\{ \xi_l^\nu \}_\nu$ of unit vectors that satisfy $|\xi_l^\nu-\xi_{l}^{\nu'}|\ge 2^{-l}$ if $\nu\neq\nu'$ and for each $\xi\in \mathbb{S}^2$ there exists a $\xi_l^\mu$ such that $|\xi-\xi_l^\nu|<2^{-l}$. 
Let $\mathcal{K}_l^\nu$ denote the corresponding cone in the $\xi$-space whose central direction is $\xi_j^\nu$, i.e., $\mathcal{K}_l^\nu=\{ \xi : | \frac{\xi}{|\xi|}-\xi_l^\nu | \le 2\cdot 2^{-l} \}$.
Define $\rho_l^\nu=\rho(2^{l}(\frac{\xi}{|\xi|}-\xi_l^\nu))$ and $\kappa_l^\nu=\rho_l^\nu\cdot(\sum_l \rho_l^\nu)^{-1}$. Then $\kappa_l$ is a smooth partition of unity subordinate to the covering of $\R^3\ \{0\}$ with the cone $\mathcal{K}_l^\nu$ such that each $\kappa_l^\nu$ is supported in $2\mathcal{K}_l^\nu$ and is homogeneous of degree $0$ and satisfies  
\begin{equation*}
\sum_{\nu\in\Omega_l}\kappa_l^\nu=1,  \quad
|\partial_\xi^\alpha\kappa_l^\nu(\xi)|\le A_\alpha 2^{|\alpha|l}|\xi|^{-|\alpha|}, \ \text{for all} \ \xi\neq0.
\end{equation*} 
Let $\widetilde{\kappa}_l^\nu$ with similar properties but slightly bigger support such that $\kappa_l^\nu\widetilde{\kappa_l^\nu}=1$. We define $K_l^\nu f:= \mathcal{F}^{-1}(\kappa_l^\nu \mathcal{F}f)$ and $\widetilde{K_l^\nu} f:= \mathcal{F}^{-1}(\widetilde{\kappa_l^\nu} \mathcal{F}f)$.
Then we have $I=\sum_{\nu}K_l^\nu$ and $K_l^\nu=\widetilde{K_l^\nu}K_l^\nu=K_l^\nu\widetilde{K_l^\nu}$.


\section{Null structures}

To exploit the null structure effectively we need to diagonalize the equation \eqref{maineqn}. Following \cite{AFS-2007},\cite{BH-2015} let us introduce the projection operators $\Pi_{\pm}^m(D)$ with symbol 
\begin{equation*}
\Pi_{\pm}^m(\xi)=\frac12[I\pm\frac{1}{\langle\xi\rangle}_m(\xi\cdot\alpha+m\beta)],  
\text{ where } 
\la \xi \ra_m =\begin{cases}  (m^2+|\xi|^2)^\frac12  &\text{ if}\  m>0 \\
|\xi| &\text{ if} \ m=0.
\end{cases}
\end{equation*}
Then we have the identity 
$$ \alpha\cdot D+m\beta=\la D\ra_m(\Pi_{+}^m(D)-\Pi_{-}^m(D)). $$
From this we have 
\begin{equation*}
U_m(t,D)=e^{-it\la D\ra_m} \Pi_{+}^m(D) - e^{it\la D\ra_m}\Pi_{-}^m(D),
\end{equation*}
which can be easily shown since the projection operators are idempotent, i.e., 
$$ \Pi_{\pm}^m(D)\Pi_{\pm}^m(D)=\Pi_{\pm}^m(D), \text{ and } \Pi_{\pm}^m(D)\Pi_{\mp}^m(D)=0.  $$
We denote briefly $\psi_{\pm}:=\Pi_{\pm}^m(D)\psi$ and split $\psi=\psi_++\psi_-$. By applying the operators $\Pi_{\pm}^m(D)$ to the equation \eqref{maineqn} we obtain the following system of equations
\begin{equation}\label{system}
\begin{cases}
(-i\partial_t+\langle D\rangle_m)\psi_{+}&=\Pi_{+}^m(D)[(V*\la\psi,\beta\psi\ra_{\C^4})\beta\psi], \ \psi_0^{+}\in H^s(\R^3) \\
(-i\partial_t-\langle D\rangle_m)\psi_{-}&=\Pi_{-}^m(D)[(V*\la\psi,\beta\psi\ra_{\C^4})\beta\psi], \ \psi_0^{-}\in H^s(\R^3)
\end{cases}
\end{equation}
where $\psi_0^{\pm}=\Pi_{\pm}^m(D)\psi_0$. Let us write the free solutions to \eqref{system} by $e^{-it\la D\ra_m}\psi_0^+$ and $e^{it\la D\ra_m}\psi_0^-$ respectively.

We investigate the nonlinear term with projection operator $\Pi_{\pm}^m(D)$. We decompose $\la\psi,\beta\psi\ra$ as 
\begin{align*}
&\la\psi,\beta\psi\ra = \la \Pi_{+}^m(D) \psi, \beta \Pi_{+}^m(D) \psi\ra
+\la \Pi_{+}^m(D) \psi, \beta \Pi_{-}^m(D) \psi\ra \\
&\qquad\qquad\   +\la \Pi_{-}^m(D) \psi, \beta \Pi_{+}^m(D) \psi\ra
+\la \Pi_{-}^m(D) \psi, \beta \Pi_{-}^m(D) \psi\ra.
\end{align*}
Note that $ \overline{\alpha^j}^T=\alpha^j $, which implies $\overline {  \Pi_{\theta_2}^m(\xi) }^{T}=\Pi_{\theta_2}^m(\xi)$. 
And  we can change the order of $\beta$ and $\Pi(D)$ as follows:
\begin{equation*}
\beta\Pi_{\pm}^m(D)=\Big( \Pi_{\mp}^m(D)\pm \frac{m\beta}{\langle D\rangle_m} \Big)\beta.
\end{equation*}
Using this we compute
\begin{align*}
\mathcal F \la \Pi_{\theta_1}^m(D)\psi_1,\beta\Pi_{\theta_2}^m(D)\psi_2\ra_{\C^4} (\xi)
&= \mathcal F \Big\la \Pi_{\theta_1}^m(D)\psi_1,\Big( \Pi_{-\theta_2}^m(D)\pm \frac{m\beta}{\langle D\rangle_m} \Big)\beta\psi_2\Big\ra_{\C^4} (\xi) \\
&=\int \la \Pi_{-\theta_2}^m(\eta-\xi)\Pi_{\theta_1}^m(\eta) \widehat\psi_1(\eta),
 \beta\widehat{\psi_{2}}(\eta-\xi) \ra_{\C^4} d\eta \\
&\qquad +\mathcal{F}\la  \Pi_{\theta_1}^m(D)\psi_1,\frac{m}{\la D\ra_m}\psi_2\ra_{\C^4} (\xi)
\end{align*}

We analyze the symbols from the bilinear operator. For more explanation about role of null structure in bilinear form see \cite{BH-2015}.
We first introduce the relation from \cite[Lemma2.1]{BH-2015}.
\begin{lem}\label{lem:PiPi}
Let $m\ge0$. The following holds true:
\begin{equation*}
\begin{split}
\Pi_{\pm}^m(\xi)\Pi_{\mp}^m(\eta)&= \mathcal{O}(\angle(\xi,\eta)) +\mathcal{O}(\la\xi\ra_m^{-1}+\la\eta\ra_m^{-1}) \\
\Pi_{\pm}^m(\xi)\Pi_{\pm}^m(\eta)&=\mathcal{O}(\angle(-\xi,\eta)) +\mathcal{O}(\la\xi\ra_m^{-1}+\la\eta\ra_m^{-1})
\end{split}
\end{equation*}
\end{lem}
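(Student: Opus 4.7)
The plan is to reduce both identities to estimating the difference $a(\xi)-a(\eta)$, where $a(\xi):=\la\xi\ra_m^{-1}(\xi\cdot\alpha+m\beta)$, and then use the elementary fact $|\hat\xi-\hat\eta|\lesssim \angle(\xi,\eta)$. Note $\Pi_{\pm}^m(\xi)=\tfrac12(I\pm a(\xi))$.

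The key algebraic input is that $a(\xi)^2=I$. Indeed, from the Clifford-type relations $\{\alpha^i,\alpha^j\}=2\delta_{ij}I$, $\{\alpha^i,\beta\}=0$ and $\beta^2=I$, one computes $(\xi\cdot\alpha+m\beta)^2=(|\xi|^2+m^2)I=\la\xi\ra_m^2 I$, so $a(\xi)^2=I$. Consequently $(I\pm a(\xi))(I\mp a(\xi))=0$, and a short manipulation yields the factorization
\begin{equation*}
\Pi_{\pm}^m(\xi)\,\Pi_{\mp}^m(\eta)=\pm\tfrac12\Pi_{\pm}^m(\xi)\,\bigl(a(\xi)-a(\eta)\bigr).
\end{equation*}
Since $\Pi_{\pm}^m(\xi)$ is a (self-adjoint) projection, its operator norm is $\le 1$, so it suffices to estimate $\|a(\xi)-a(\eta)\|$.

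Next I would split
\begin{equation*}
a(\xi)-a(\eta)=\Bigl(\tfrac{\xi}{\la\xi\ra_m}-\tfrac{\eta}{\la\eta\ra_m}\Bigr)\!\cdot\alpha\;+\;m\beta\Bigl(\tfrac{1}{\la\xi\ra_m}-\tfrac{1}{\la\eta\ra_m}\Bigr).
\end{equation*}
The second summand is obviously $\mathcal O(\la\xi\ra_m^{-1}+\la\eta\ra_m^{-1})$. For the first summand, the identity $\la\xi\ra_m-|\xi|=m^2/(\la\xi\ra_m+|\xi|)$ gives $\bigl|\tfrac{|\xi|}{\la\xi\ra_m}-1\bigr|\lesssim \la\xi\ra_m^{-1}$ (constants depending on $m$, which is harmless and zero when $m=0$), so that
\begin{equation*}
\tfrac{\xi}{\la\xi\ra_m}-\tfrac{\eta}{\la\eta\ra_m}
=\hat\xi-\hat\eta+\mathcal O(\la\xi\ra_m^{-1}+\la\eta\ra_m^{-1}),
\end{equation*}
where $\hat\xi=\xi/|\xi|$. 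Combined with $|\hat\xi-\hat\eta|\lesssim \angle(\xi,\eta)$, this gives $\|a(\xi)-a(\eta)\|=\mathcal O(\angle(\xi,\eta))+\mathcal O(\la\xi\ra_m^{-1}+\la\eta\ra_m^{-1})$, establishing the first line.

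For the second identity, I would reduce to the first by observing that $a(-\eta)+a(\eta)=2m\beta/\la\eta\ra_m=\mathcal O(\la\eta\ra_m^{-1})$, so $\Pi_{\pm}^m(\eta)=\Pi_{\mp}^m(-\eta)+\mathcal O(\la\eta\ra_m^{-1})$. Multiplying by $\Pi_{\pm}^m(\xi)$ on the left and applying the first line with $\eta\mapsto -\eta$ yields
\begin{equation*}
\Pi_{\pm}^m(\xi)\,\Pi_{\pm}^m(\eta)=\mathcal O(\angle(\xi,-\eta))+\mathcal O(\la\xi\ra_m^{-1}+\la\eta\ra_m^{-1}),
\end{equation*}
and $\angle(\xi,-\eta)=\angle(-\xi,\eta)$. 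There is no real obstacle; the only subtlety is to book-keep the mass corrections so that the cross terms involving $m\beta$ (which spoil the anti-commutation $a(-\eta)=-a(\eta)$ one would naively expect) are absorbed correctly into the $\la\eta\ra_m^{-1}$ error, and to note that the $|\xi|/\la\xi\ra_m-1$ discrepancy is of the correct order $\la\xi\ra_m^{-1}$ rather than the sharper $\la\xi\ra_m^{-2}$ one might write, which is all that is needed.
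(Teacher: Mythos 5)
Your proof is correct. The paper does not actually prove this lemma; it simply states it as \cite[Lemma~2.1]{BH-2015} and refers the reader there. Your self-contained argument is the standard route one expects that reference to take: use the Clifford relations to get $a(\xi)^2=I$, which yields the annihilation identity $(I\pm a(\xi))(I\mp a(\xi))=0$ and hence the clean factorization $\Pi_{\pm}^m(\xi)\Pi_{\mp}^m(\eta)=\pm\tfrac12\Pi_{\pm}^m(\xi)(a(\xi)-a(\eta))$; then split $a(\xi)-a(\eta)$ into the purely angular part $(\hat\xi-\hat\eta)\cdot\alpha$ and mass corrections of size $\la\xi\ra_m^{-1}+\la\eta\ra_m^{-1}$. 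I checked the sign in the factorization for both choices of $\pm$, the bound $\bigl||\xi|/\la\xi\ra_m-1\bigr|\lesssim_m\la\xi\ra_m^{-1}$, and the reduction of the second identity to the first via $\Pi_{\pm}^m(\eta)=\Pi_{\mp}^m(-\eta)+\mathcal O(\la\eta\ra_m^{-1})$; all steps hold, and the implicit dependence on $m$ is harmless since $m$ is fixed. In short, the proposal is a correct proof, essentially the canonical one, of a statement the paper leaves to the literature.
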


From this relation we obtain the upperbound when the angular localization is concerned.
\begin{lem}\label{Null}
Let $m\ge0$ and $\theta_1,\theta_2\in \{+,-\}$. 
Suppose $\begin{cases}
k_1,k_2\in \N_0, 1\le l\le\min(k_1,k_2)+10  &\text{ if } m>0 \\
k_1,k_2,l\in \Z,  l\le\min(k_1,k_2)+10  &\text{ if } m=0.
\end{cases}$

For $\xi_j^{\nu},\xi_j^{\nu'}\in\Omega_j$ with $|\theta_1\xi_l^\nu-\theta_2\xi_l^{\nu'}|\le 2^{-l}$, $v,w\in \C^{4}$, we have
	\begin{equation}\label{keyestimate}
	| \la\Pi_{\theta_1}^m(2^{k_1}\xi_l^\nu )v, \beta \Pi_{\theta_2}^m(2^{k_2}\xi_l^{\nu'})w \ra_{\C^4}|
	\ls 2^{-l} |v||w|.
	\end{equation}
\end{lem}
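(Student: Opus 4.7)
The plan is to reduce the bilinear form to an operator-norm estimate on the symbol matrix $\Pi_{\theta_1}^m(\eta_1)\beta\Pi_{\theta_2}^m(\eta_2)$ (with $\eta_1=2^{k_1}\xi_l^\nu$, $\eta_2=2^{k_2}\xi_l^{\nu'}$) and then to split this matrix into a null-structure piece plus an $m$-weighted correction. Using the identity $\beta\Pi_{\pm}^m(\eta)=\Pi_{\mp}^m(\eta)\beta\pm m/\la\eta\ra_m$ recorded above (which is immediate from $\beta\alpha^j=-\alpha^j\beta$ and $\beta^2=I$) together with the self-adjointness $\overline{\Pi_\theta^m(\eta)}^{\,T}=\Pi_\theta^m(\eta)$, I would rewrite
\begin{align*}
\la\Pi_{\theta_1}^m(\eta_1)v,\beta\Pi_{\theta_2}^m(\eta_2)w\ra_{\C^4}
&=\la v,\Pi_{\theta_1}^m(\eta_1)\Pi_{-\theta_2}^m(\eta_2)\beta w\ra
+\theta_2\frac{m}{\la\eta_2\ra_m}\la v,\Pi_{\theta_1}^m(\eta_1)w\ra,
\end{align*}
so by Cauchy--Schwarz it suffices to bound the two matrix factors in operator norm by $2^{-l}$.

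For the first (null-structure) factor, Lemma \ref{lem:PiPi} yields
\[
\|\Pi_{\theta_1}^m(\eta_1)\Pi_{-\theta_2}^m(\eta_2)\|\ls \angle(\sigma\eta_1,\eta_2)+\la\eta_1\ra_m^{-1}+\la\eta_2\ra_m^{-1},
\]
where $\sigma=+1$ when $\theta_1=\theta_2$ (the two projections in the product then carry opposite signs) and $\sigma=-1$ when $\theta_1=-\theta_2$ (they carry the same signs). Since the $\xi_l^\bullet$ are unit vectors, one checks $|\sigma\xi_l^\nu-\xi_l^{\nu'}|=|\theta_1\xi_l^\nu-\theta_2\xi_l^{\nu'}|\le 2^{-l}$ in each of the four sign patterns, whence $\angle(\sigma\eta_1,\eta_2)=\angle(\sigma\xi_l^\nu,\xi_l^{\nu'})\ls 2^{-l}$. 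The error terms $\la\eta_j\ra_m^{-1}$ satisfy $\la\eta_j\ra_m^{-1}\ls 2^{-k_j}\ls 2^{-l}$ by the hypothesis $l\le\min(k_1,k_2)+10$, and the one corner case $m>0$, $k_j=0$ is harmless since then $l\in\{1,\ldots,10\}$ and $2^{-l}$ is bounded below. The mass correction $m/\la\eta_2\ra_m$ vanishes for $m=0$ and otherwise is controlled by $2^{-k_2}\ls 2^{-l}$ through the same mechanism (with the implicit constant depending on $m$ in the corner case).

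The only real obstacle is the sign bookkeeping: Lemma \ref{lem:PiPi} toggles between the angles $\angle(\eta_1,\eta_2)$ and $\angle(-\eta_1,\eta_2)$ according to whether the projections in a given product have opposite or equal signs, so one must track carefully how the $\beta$-switch flips $\theta_2$ into $-\theta_2$ inside the first factor and then verify in each of the four sign patterns that the hypothesis $|\theta_1\xi_l^\nu-\theta_2\xi_l^{\nu'}|\le 2^{-l}$ matches the relevant angle. Everything else is a direct application of Lemma \ref{lem:PiPi} and the hypotheses on $k_1,k_2,l$.
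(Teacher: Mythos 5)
Your proof is correct. The paper gives no argument of its own here—it simply cites [BH-2017, Lemma~3.3]—but the algebraic preliminaries recorded just before the lemma (the identity $\beta\Pi_{\pm}^m(D)=\bigl(\Pi_{\mp}^m(D)\pm \frac{m\beta}{\la D\ra_m}\bigr)\beta$ and the Hermitian symmetry $\overline{\Pi_\theta^m(\xi)}^{T}=\Pi_\theta^m(\xi)$) are exactly the tools you deploy, so your write-up makes the deferred argument explicit. In particular, the sign bookkeeping is right: commuting $\beta$ past $\Pi_{\theta_2}^m$ flips $\theta_2$ to $-\theta_2$, so the product $\Pi_{\theta_1}^m\Pi_{-\theta_2}^m$ involves opposite signs precisely when $\theta_1=\theta_2$, and in each sign pattern the hypothesis $|\theta_1\xi_l^\nu-\theta_2\xi_l^{\nu'}|\le 2^{-l}$ is exactly the angle smallness $\angle(\sigma\xi_l^\nu,\xi_l^{\nu'})\ls 2^{-l}$ that Lemma~\ref{lem:PiPi} requires. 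The error terms $\la\eta_j\ra_m^{-1}$ and the mass correction $m/\la\eta_2\ra_m$ are each $\ls 2^{-l}$ under $l\le\min(k_1,k_2)+10$, with the $m>0$, $k_j=0$ corner harmless since then $1\le l\le 10$ makes both sides comparable to constants. This is a valid, self-contained reconstruction.
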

\begin{proof}
See \cite[Lemma~3.3]{BH-2017}.
\end{proof}


\section{Linear and Bilinear Estimates}
\subsection{Free solutions}
In this subsection we prove a localized linear and bilinear estimates for free solutions of Klein-Gordon equation $e^{\pm it\la D\ra_m}f$ with $f:\R^3\rightarrow \C^4$.
First, let us introduce the Stirchartz estimates.
\begin{lem}[Klein-Gordon Strichartz estimates]\label{lem:stri}
Let $m>0$. Suppose $2\le p,q\le \infty$, $\frac2p+\frac{3}{q}=\frac32$ and $(p,q)\neq(2,\infty)$. Then
\begin{equation}\label{ineq:stri}
\|e^{\pm it\la D\ra_m}P_k^m f\|_{L_t^p L_x^q(\R^{1+3})} \ls \la 2^{k} \ra_m ^{\frac54(\frac12-\frac1q)}\|P_k f\|_{L^2(\R^3)}
\end{equation}	
\end{lem}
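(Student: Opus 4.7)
The standard route is to combine the trivial $L^2_x$ energy conservation $\|e^{\pm it\la D\ra_m}P_k f\|_{L^2_x}=\|P_k f\|_{L^2_x}$ (immediate from Plancherel, since the Fourier multiplier has unit modulus) with a frequency-localized dispersive $L^1_x\to L^\infty_x$ estimate for the half-Klein--Gordon propagator, and then feed both into the abstract Keel--Tao $TT^*$/Hardy--Littlewood--Sobolev argument along the Schr\"odinger-admissible line $\frac{2}{p}+\frac{3}{q}=\frac{3}{2}$ with decay parameter $\sigma=3/2$.

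The dispersive input comes from a stationary-phase analysis of the convolution kernel $K_t^k(x)=\int_{\R^3}e^{i(t\la\xi\ra_m+x\cdot\xi)}\varphi_k(\xi)\,d\xi$. For the low-frequency piece $k=0$ (so $P_0^m=P_{\le 0}$, $m>0$), the phase $\la\xi\ra_m=(m^2+|\xi|^2)^{1/2}$ is smooth on $\{|\xi|\le 2\}$ with Hessian uniformly non-degenerate (eigenvalues bounded above and below by positive constants depending on $m$), so a routine three-dimensional stationary-phase argument yields $\|K_t^{\le 0}\|_{L^\infty_x}\lesssim_m(1+|t|)^{-3/2}$; this matches the right-hand side because $\la 2^0\ra_m\sim 1$. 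For $k\ge 1$, the Hessian of $\la\xi\ra_m$ on $|\xi|\sim 2^k$ has two tangential eigenvalues of size $\la 2^k\ra_m^{-1}$ and a single much smaller radial eigenvalue of size $m^2\la 2^k\ra_m^{-3}$, reflecting the anisotropic curvature of the Klein--Gordon mass shell. A refined Littman-type stationary-phase argument, or equivalently a radial dyadic decomposition into thin shells on which the propagator becomes genuinely Schr\"odinger-like, produces the classical Klein--Gordon dispersive bound in dimension three (cf.\ Brenner or Machihara--Nakanishi--Ozawa).

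Once the dispersive bound is in hand, the Keel--Tao abstract theorem together with Hardy--Littlewood--Sobolev in time produces the claimed Strichartz estimate with derivative loss $\frac{5}{4}(\frac12-\frac1q)$ on every Schr\"odinger-admissible pair $(p,q)\ne(2,\infty)$. The principal obstacle is pinning down the sharp high-frequency dispersive constant: because the radial Hessian eigenvalue degenerates at a rate $\la 2^k\ra_m^{-2}$ faster than the tangential ones, a na\"ive application of the non-degenerate three-dimensional stationary-phase formula overshoots. The correct constant is obtained by interpolating the ``wave-regime'' short-time bound $\lesssim\la 2^k\ra_m^{2}|t|^{-1}$ (valid while only the tangential stationary phase applies) with the long-time ``Schr\"odinger-regime'' bound where the mass unfreezes the radial direction. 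With this sharp dispersive estimate secured, the $TT^*$ packaging is entirely routine and produces the stated Strichartz inequality; the endpoint exclusion $(p,q)\ne(2,\infty)$ is inherited from the usual obstruction in Keel--Tao.
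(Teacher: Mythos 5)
The paper itself supplies no proof; Lemma~\ref{lem:stri} is justified by citation alone, so any self-contained derivation is genuinely different, and the route you propose---a frequency-localized stationary-phase dispersive $L^1\to L^\infty$ bound, fed into Keel--Tao with Hardy--Littlewood--Sobolev on the Schr\"odinger-admissible line $\tfrac2p+\tfrac3q=\tfrac32$---is the standard one. Your reading of the Hessian of $\la\xi\ra_m$ (two tangential eigenvalues $\sim\la\xi\ra_m^{-1}$, one radial eigenvalue $\sim m^2\la\xi\ra_m^{-3}$) is correct, and the rescaled/Littman analysis with effective mass $m2^{-k}$ is the right way to make the stationary phase uniform in $k$. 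However, you do not carry the exponent to the end, and it does not land on the displayed formula: $\det\nabla^2\la\xi\ra_m\sim m^2\la\xi\ra_m^{-5}$, so the sharp localized dispersive bound is $|K_t^k(x)|\lesssim 2^{3k}(1+2^k|t|)^{-1}(1+m^2 2^{-k}|t|)^{-1/2}$, equivalently $\|e^{\pm it\la D\ra_m}P_k^m\|_{L^1\to L^\infty}\lesssim_m \la 2^k\ra_m^{5/2}|t|^{-3/2}$, and Keel--Tao then yields the Strichartz prefactor $\la 2^k\ra_m^{\frac52(\frac12-\frac1q)}$, not $\la 2^k\ra_m^{\frac54(\frac12-\frac1q)}$. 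The $\tfrac52$ form is what the standard Klein--Gordon Strichartz references give on the Schr\"odinger-admissible line in $\R^3$; the $\tfrac54$ in \eqref{ineq:stri} looks like a factor-of-two misprint. (This does not affect the paper, which only invokes the lemma on the low-frequency block $P_{\le 0}$ where $\la 2^k\ra_m\sim 1$.) Be explicit in your write-up about which exponent your argument actually establishes and why that suffices downstream.

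A smaller point: the remark that ``a naive application of the non-degenerate three-dimensional stationary-phase formula overshoots'' mischaracterizes the difficulty. Plugging the correct determinant into the non-degenerate stationary-phase formula gives precisely the $\la2^k\ra_m^{5/2}|t|^{-3/2}$ bound; what genuinely needs care is uniformity of the remainder as the rescaled mass $m2^{-k}\to 0$ (the radial eigenvalue degenerates) together with the short-time wave regime, and that is exactly what the anisotropic/Littman decomposition you invoke is for. It produces the three-regime interpolated kernel bound above, not a smaller leading exponent.
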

\begin{proof}
See \cite{COetal-11}.
\end{proof}

Next we introduce the more localized linear estimates. For detailed explanation and proof, See \cite[Lemma3.1]{BH-2017} and reference therein. We obey the notation in \cite{BH-2017}. For the convenience of reader we explicitly organize here.
For $k\in \Z$ let us consider the lattice point $L_k=2^{k}\Z^3$. Let $\eta:\R\rightarrow [0,1]$ be an even smooth function supported in the interval $[-\frac23,\frac23]$ with the property that 
$\sum_{n\in \Z} \eta(\xi-n)=1$ for $\xi\in\R$. Let $\gamma:\R^3\rightarrow[0,1], \gamma(\xi)=\eta(\xi_1)\eta(\xi_2)\eta(\xi_3)$, where $\xi=(\xi_1,\xi_2,\xi_3)$.
For $k\in\Z$ and $n\in L_k$ let $\gamma_{k,n}(\xi)=\gamma(\frac{(\xi-n)}{2^k})$. Clearly, $\sum_{n\in L_k}\gamma_{k,n}\equiv1$ on $\R^3$. We define the projection operator $\Gamma_{k,n}$ by $\Gamma_{k,n}f=\mathcal{F}^{-1}(\gamma_{k,n}\mathcal{F}f)$. Then $I=\sum_{n\in L_k}\Gamma_{k,n}$. Finally define $\widetilde{\Gamma_{k,n}}$ as before so that  $ \Gamma_{k,n}\widetilde{\Gamma_{k,n}}=\widetilde{\Gamma_{k,n}}\Gamma_{k,n}=1$. Now we are ready to give a statement.

\begin{lem}[Localized Strichartz estimates]\label{lem:localstri}
Let $m\ge0$. Suppose $\frac1p+\frac1q=\frac12$ with $2<p<\infty$. Then
\begin{equation}\label{ineq:localstri}
 \sup_{k'\le k \in \Z} 2^{-\frac kp}2^{-\frac{k'}{p}}
 \Big(\sum_{n\in \Xi_{k'}}\|\Gamma_{k',n}P_{k}^m e^{\pm it\la D\ra_m} f\|_{L_t^p L_x^q}^2\Big)^\frac12
 \ls \|f\|_{L^2(\R^3)}.
\end{equation}
\end{lem}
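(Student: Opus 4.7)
The strategy is to reduce the localized Strichartz estimate, by orthogonality in the frequency cubes together with a rescaling, to a single-cube fixed-scale bound which should follow from stationary phase in the two tangential directions of the Klein--Gordon dispersion relation.

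First, I would note that the multipliers $\gamma_{k',n}$ have boundedly overlapping supports, so $\sum_n |\gamma_{k',n}(\xi)|^2\ls 1$ and Plancherel gives $\sum_n\|\Gamma_{k',n}P_k^m g\|_{L^2}^2\ls\|g\|_{L^2}^2$. Since $e^{\pm it\la D\ra_m}$ is $L^2$-unitary, it will therefore suffice to establish, for each relevant $n$ (satisfying $|n|\sim 2^k$ when $k>0$, or $|n|\ls 1$ when $k=0$),
\begin{equation}\label{eq:single}
\|\Gamma_{k',n}P_k^m e^{\pm it\la D\ra_m}f\|_{L^p_tL^q_x}\ls 2^{(k+k')/p}\,\|\Gamma_{k',n}P_k^m f\|_{L^2}.
\end{equation}

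Next, in the Fourier integral defining the left-hand side of \eqref{eq:single} I would substitute $\xi=n+2^{k'}\eta$, factor off $e^{i(x\cdot n\pm t\la n\ra_m)}$, and perform the Galilean translation $x\mapsto x\pm t\,n/\la n\ra_m$ to absorb the linear Taylor term of $\la n+2^{k'}\eta\ra_m$. The residual phase is $2^{k'}\tilde x\cdot\eta\pm s\,\phi_n(\eta)$, where $s=2^{2k'}t$ and
\[
\phi_n(\eta):=2^{-2k'}\bigl(\la n+2^{k'}\eta\ra_m-\la n\ra_m-2^{k'}\tfrac{n}{\la n\ra_m}\cdot\eta\bigr).
\]
Setting $\widetilde f(\eta):=\widehat{P_k^m f}(n+2^{k'}\eta)$, which is supported in $\{|\eta|\ls 1\}$, and tracking the Jacobians of the changes of variable $y=2^{k'}\tilde x$ and $s=2^{2k'}t$ together with $1/p+1/q=1/2$, one gets
\[
\|\Gamma_{k',n}P_k^m e^{\pm it\la D\ra_m}f\|_{L^p_tL^q_x}=2^{3k'/2+k'/p}\|e^{\pm is\phi_n(D)}\widetilde f\|_{L^p_sL^q_y},\quad \|\Gamma_{k',n}P_k^m f\|_{L^2}\sim 2^{3k'/2}\|\widetilde f\|_{L^2},
\]
so that \eqref{eq:single} reduces to the unit-scale bound $\|e^{\pm is\phi_n(D)}\widetilde f\|_{L^p_sL^q_y}\ls 2^{k/p}\|\widetilde f\|_{L^2}$.

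Finally, on $|\eta|\ls 1$ Taylor expansion yields $\phi_n(\eta)=\tfrac12\eta^\top H(n)\eta+O(2^{k'}\la n\ra_m^{-2})$, where $H(n)$ has two tangential eigenvalues of size $\la n\ra_m^{-1}\sim 2^{-k}$ (the radial eigenvalue, $m^2\la n\ra_m^{-3}$, is smaller). Standard stationary phase in the two transversal directions should then produce the dispersive decay $\|e^{\pm is\phi_n(D)}\chi\|_{L^1\to L^\infty}\ls(2^{-k}|s|)^{-1}$ for a smooth cutoff $\chi$ to the unit cube, and combined with the trivial $L^2$ bound and the Keel--Tao $TT^*$ argument for wave-admissible pairs $1/p+1/q=1/2$, this gives exactly the required $2^{k/p}$ loss. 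The massive low-frequency case $(m>0,\,k=0,\,P_0^m=P_{\le 0})$ can be handled directly by Lemma~\ref{lem:stri}, since $\la\xi\ra_m$ has full-rank Hessian on $|\xi|\ls 1$. The main obstacle will be the partially degenerate Hessian of $\la\cdot\ra_m$ at high frequency, which rules out three-dimensional Schr\"odinger-type decay and forces reliance on the two-dimensional tangential dispersion; this is precisely what produces the wave-type exponent $(k+k')/p$ rather than a smaller one.
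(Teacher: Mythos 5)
Your outline is correct in essence and reaches the same endpoint as the paper, but the route through the key kernel/dispersive bound is genuinely different. The paper's proof reduces to the single-cube bound by orthogonality, applies a direct $TT^*$ argument with Hardy--Littlewood--Sobolev to reduce matters to the kernel bound $|K_{k',k;n}(t,x)|\ls 2^{k'}2^k|t|^{-1}$, then proves this by rescaling the frequency to unit size, passing to spherical coordinates, and doing a single integration by parts in the polar angle $\theta$ (exploiting the Jacobian $\sin\theta$) in the critical regime $|y|\sim |s|$, with the cap thickness $2^{k'-k}$ supplying the extra factor. You instead rescale by $2^{k'}$, perform a Galilean translation to kill the linear part of the phase, Taylor-expand $\phi_n$ to read off the two tangential Hessian eigenvalues $\sim 2^{-k}$, and invoke two-dimensional stationary phase plus Keel--Tao; a scaling computation confirms that this gives the identical $2^k|s|^{-1}$ decay and hence the same $2^{(k+k')/p}$ loss, so the two proofs are quantitatively equivalent. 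Your version is conceptually cleaner (it isolates exactly which Hessian directions are responsible for the $|t|^{-1}$ rate), but leaves the stationary-phase estimate as "standard," which does require some care: the tangent plane to the sphere $\{|\xi|\sim 2^k\}$ rotates as $\eta$ ranges over the unit cube, and the near-degenerate radial direction must be integrated out separately; these are precisely the technicalities the paper handles via spherical coordinates. Two small points to tighten: (i) $\widetilde f$ as you defined it is supported where $|n+2^{k'}\eta|\sim 2^k$, not $|\eta|\ls 1$ -- you should carry the cutoff $\gamma(\eta)$ along; and (ii) the massive low-frequency case is not handled "directly" by Lemma~\ref{lem:stri} alone -- the factor $2^{k'/p}$ there comes from an additional Bernstein step (as in the paper), or equivalently from running your unit-scale rescaling with the full-rank $3\times 3$ Hessian, but not from Lemma~\ref{lem:stri} by itself.
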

\begin{proof}
By orthogonality it suffices to show that 
\begin{equation}\label{ineq:freesolution}
\|\Gamma_{k',n}P_{k}^m e^{\pm it\la D\ra_m} f\|_{L_t^p L_x^q}
\ls 2^{\frac kp}2^{\frac{k'}{p}} \|f\|_{L^2(\R^3)},
\end{equation}
uniformly in $n\in L_{k'}$.	
	
We first consider the case $m>0$ and $k=0$, which reduces to show that for $k'\le 0$
\begin{equation*}
\|\Gamma_{k',n}P_{\le 0} e^{\pm it\la D\ra_m} f\|_{L_t^p L_x^q}
\ls 2^{\frac{k'}{p}} \|f\|_{L^2(\R^3)}.
\end{equation*}
This case follows from Bernstein's inequality and Strichartz estimates \eqref{ineq:stri}
\begin{align*}
\|\Gamma_{k',n}P_{\le0} e^{\pm it\la D\ra_m} f\|_{L_t^p L_x^q}
&\ls 2^{\frac{k'}{p}} \|\Gamma_{k',n}P_{\le0} e^{\pm it\la D\ra_m} f\|_{L_t^p L_x^r}, \ \frac1r=\frac12-\frac{2}{3p} \\
&\ls 2^{\frac{k'}{p}}\| P_{\le 0} e^{\pm it\la D\ra_m} f\|_{L_t^p L_x^r}\ls 2^{\frac{k'}{p}}\|  f\|_{L_x^2}.
\end{align*}
Note that we don't consider the case $m>0$ and $k<0$.
	
For the remaining case we follow the main stream of the proof in \cite[Lemma3.1]{BH-2017}.
Let $T_t$ be the operator on $L^2(\R^3)$ into $L_t^pL_x^q(\R^{1+3})$ defined by $T_t=\Gamma_{k',n}P_{k}^me^{\pm it\la D\ra_m}$. 
The $T_tT_t^*$ is a space-time convolution operator with the kernel
\begin{equation*}
K_{k',k;n}(t,x)=\int_{\R^3} e^{\pm it\la\xi\ra_m+ix\cdot\xi}\rho_k^2(\xi)\gamma_{k',n}^2(\xi)d\xi.
\end{equation*}
We claim that the kernel $K_{k',k;n}$ satisfies the following estimates
\begin{equation}\label{BoundofK}
 |K_{k',k;n}(t,x)|\ls 2^{k'} 2^k|t|^{-1},
\end{equation}
uniformly in $x$ and  $n\in L_k$.
Suppose for a moment \eqref{BoundofK} holds.
By the standard $TT^*$ argument to prove \eqref{ineq:freesolution} is equivalent to showing
\begin{equation}\label{BoundTT}
\|T_tT_t^{*}\|_{L_t^{p'}L_x^{q'}\rightarrow L_t^p L_x^q}\ls 2^{\frac{2k'}{p}}2^{\frac2pk}
\end{equation} where $p'$ is h\"older conjugate of $p$.
By Young's inequality and Plancherel's theorem we have
\begin{align*}
\|K_{k',k;n}(t,\cdot)*F(t)\|_{L_x^\infty} &\ls \| K_{k',k;n}(t,x)\|_{L_x^\infty} \|F(s)\|_{L_x^1} \\
\|K_{k',k;n}(t,\cdot)*F(t)\|_{L_x^2} &\ls \| \mathcal{F}_x K_{k',k;n}(t,\cdot)\|_{L_\xi^\infty} \|F(s)\|_{L_x^2} \ls \|F(s)\|_{L_x^2}.
\end{align*}
And by interpolation of these two estimates we obtain for $q\ge2$
\begin{equation*}
\|K_{k',k;n}(t,\cdot)*F(t)\|_{L_x^q} \ls \| K_{k',k;n}(t,x)\|_{L_x^\infty}^{1-\frac2q} \|F(s)\|_{L_x^{q'}}.
\end{equation*}
Then we estimate
\begin{align*}
\| T_tT_t^*F\|_{L_t^pL_x^q} 
&\ls \Big\| \int_{\R} \| K_{k',k;n}(t-s,\cdot)*F(s) \|_{L_x^q} ds \Big\|_{L_t^p} \\
&\ls \|  \int_{\R} \| K_{k',k;n}(t-s,\cdot)\|_{L_x^\infty}^{1-\frac2q}\|F(s)\|_{L_x^{q'}} ds \|_{L_t^p}\\
&\ls (2^{k'}2^k)^{1-\frac2q} 
\| \int |t-s|^{-1+\frac2q} \|F(s)\|_{L_x^{q'}} ds \|_{L_t^p}.
\end{align*}
Then Hardy-Littlewood-Sobolev inequality with $0<\frac2q=\frac{1}{p'}-\frac1p<1$ implies \eqref{BoundTT}.

Finally we prove \eqref{BoundofK}. Rescaling yields
\begin{align*}
K_{k',k;n}(t,x)=2^{3k} K_{k'-k,1,2^{-k}n}(2^kt,2^kx)
\end{align*}
Then it suffices to show that for $\ |a|\sim 1$
\begin{equation}\label{ineq:K}
|K_{k'-k,1;a}(s,y)|\ls  2^{k'-k}|s|^{-1},
\end{equation} 
where for $\la\xi\ra_{k;m}=(|\xi|^2+2^{-2k}m)^\frac12$,
\begin{equation*}
K_{k'-k,1;a}(s,y)=\int_{\R^3} e^{\pm is\la \xi\ra_{k}+iy\cdot\xi}\rho_1^2(\xi)\gamma_{k'-k,a}^2(\xi)d\xi, \ .
\end{equation*}
By rotation, we may assume that $y=(0,0,|y|)$. We change of variables using spherical coordinates:
\begin{align*}
K_{k'-k,1;a}(s,y)&=\int_0^\infty\int_0^{2\pi}\int_0^{\pi}
e^{i(|y|r\cos\theta+s\la r\ra_k)}\zeta_{k',k}(\theta,\phi,r)\sin\theta r^2d\theta d\phi dr,  
\end{align*}
where $\zeta_{k',k}$ is smooth cutoff function supported in a thickened spherical cap of size $2^{k'-k}$ located near unit sphere.
The derivative of phase function with respect to $r$ is $|y|\cos\theta+s\frac{r}{\la r\ra_{k;m}}$. Thus the worst case occurs when $0<\theta\ll1$ and $|y|\sim  2^{k}\la2^{k}\ra_{k;m}^{-1}|s|$, otherwise since the derivative of phase function has a lower bound we can perform an integration by parts arbitrarily many times and get sufficient decay. So we only discuss the first case, where $\zeta_{k',k}(r)$ and $\zeta_{k',k}(\theta)$ is supported in an interval of length $\ls 2^{k'-k}$ in $(\frac{1}{4},4)$ and $[0,\pi)$ respectively and $|\partial_\theta\zeta_{k',k}|\ls 2^{k-k'}$.
We integrate by parts with respect to $\theta$:
\begin{align*}
K_{k'-k,1,a}(s,y)&=\int_0^\infty\int_0^{2\pi}\bigg[ \frac{i}{|y|}e^{i(|y|r\cos\theta+s\la r\ra_{k;m})} \zeta_{k',k}(\theta,\phi,r) \bigg]_0^\pi r d\phi dr \\
&-\frac{i}{|y|} \int_0^\infty\int_0^{2\pi}\int_0^{\pi}
e^{i(|y|r\cos\theta+s\la r\ra_{k;m})}\partial_\theta \zeta_{k',k}(\theta,\phi,r) rd\theta d\phi dr.
\end{align*}
The the support properties of $\zeta_{k',k}$ imply
\begin{equation*}
 | K_{k'-k,1,a} (s,y)|\ls 2^{k'-k}|y|^{-1},
\end{equation*}
which implies \eqref{ineq:K} since $2^{k}\la2^{k}\ra_{k;m}^{-1}\sim 1$.

\end{proof}

We prove a bilinear estimates for free solutions which propagate to the same direction.
\begin{lem}\label{Lem:bilinear for free}
	Let $m\ge0$. Suppose $\begin{cases}
	k\in\Z \text{ and } k_1,k_2\in \N,   &\text{ if } m>0 \\
	k,k_1,k_2\in \Z,  &\text{ if } m=0
	\end{cases}$ satisfy $2^k\ll 2^{k_1}\sim 2^{k_2}$.  Then we have
	\begin{align}\label{Bilinear estimate}
	\| P_k \la P_{k_1}^m e^{\pm it\la D\ra_m}f, P_{k_2}^m e^{\pm it\la D\ra_m} g\ra_{\R^4}\|_{L_{t,x}^2(\R^{1+3})}
	\ls 2^k \| f\|_{L^2(\R^3)} \| g\|_{L^2(\R^3)},
	\end{align}
where $\la f(x),g(x)\ra_{\R^4} := \sum_{j=1}^4 f_j(x) g_j(x)$.
\end{lem}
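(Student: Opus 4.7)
The plan is to combine an angular cap decomposition at scale $2^{-l}$ with $l=k_1-k$, a per-pair bilinear estimate proved by Plancherel in space-time together with a surface-measure bound, and finally the triangle inequality with Cauchy--Schwarz. Since $\la u,v\ra_{\R^4}=\sum_{j=1}^{4}u^j v^j$ is merely a sum of four scalar products, it suffices to treat each component, so below we assume $f,g$ are scalar-valued. Write $u_i=e^{\pm it\la D\ra_m}P_{k_i}^m f_i$ with $f_1=f,\,f_2=g$ and introduce the resonance phase $\phi(\xi_1,\xi):=\pm\la\xi_1\ra_m\pm\la\xi-\xi_1\ra_m$ with both signs taken equal, as dictated by propagation in the same direction.

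We decompose $\widehat{f}=\sum_{\nu_1}\kappa_{l}^{\nu_1}\widehat{f}$ and $\widehat{g}=\sum_{\nu_2}\kappa_{l}^{\nu_2}\widehat{g}$ via the angular partition from Subsection~\ref{subsec:not}, with $l=k_1-k$. Setting $u_{i,\nu}=K_l^{\nu}u_i$, one has $P_k(u_1 u_2)=\sum_{\nu_1,\nu_2}P_k(u_{1,\nu_1}u_{2,\nu_2})$. The Fourier support of $u_{1,\nu_1}u_{2,\nu_2}$ lies in the Minkowski sum of the corresponding annular caps, and for its $P_k$-projection to be non-trivial we must have $|\xi_l^{\nu_1}+\xi_l^{\nu_2}|\ls 2^{-l}$, i.e.\ the caps must be almost antipodal; only $O(1)$ such partners $\nu_2$ exist for each $\nu_1$. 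Let $\mathcal{N}$ denote the resulting set of admissible pairs.

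The main step is the per-pair bilinear estimate
\begin{equation*}
\|u_{1,\nu_1}u_{2,\nu_2}\|_{L^2_{t,x}}\ls 2^{k}\,\|K_l^{\nu_1}f\|_{L^2}\|K_l^{\nu_2}g\|_{L^2},\qquad (\nu_1,\nu_2)\in\mathcal{N}.
\end{equation*}
Plancherel in $(t,x)$ writes the space-time Fourier transform of the product as $\int\widehat{K_l^{\nu_1}f}(\xi_1)\widehat{K_l^{\nu_2}g}(\xi-\xi_1)\delta(\tau-\phi(\xi_1,\xi))\,d\xi_1$; Cauchy--Schwarz on the delta-measure followed by integration of the $\tau$-delta reduces matters to $\sup_{\tau,\,|\xi|\sim 2^k}A(\tau,\xi)\ls 2^{2k}$, where $A(\tau,\xi)$ is the integral of $|\nabla_{\xi_1}\phi|^{-1}$ over the level set $\{\phi(\cdot,\xi)=\tau\}$ intersected with the annular cap $2\mathcal{K}_l^{\nu_1}\cap\{|\xi_1|\sim 2^{k_1}\}$. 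Differentiating gives $\nabla_{\xi_1}\phi=\xi_1/\la\xi_1\ra_m-(\xi-\xi_1)/\la\xi-\xi_1\ra_m$; in the high-high-low regime with $|\xi_1|\sim|\xi-\xi_1|\sim 2^{k_1}\gs m$ (the mass lower bound being automatic when $k_1\in\N$ in the massive case), $\xi_1$ and $-(\xi-\xi_1)$ are nearly parallel, so the two unit-like vectors align to give $|\nabla_{\xi_1}\phi|\sim 1$. The level set $\{\phi=\tau\}$ is thus a smooth $2$-surface approximately coinciding with a sphere of radius $\sim 2^{k_1}$, and its intersection with the cap (of tangential extent $\sim 2^{k_1}\cdot 2^{-l}=2^k$) is a disc of area $\ls 2^{2k}$. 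This yields $A\ls 2^{2k}$ and the per-pair estimate.

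Finally, by the triangle inequality and the per-pair bound,
\begin{equation*}
\|P_k\la u_1,u_2\ra_{\R^4}\|_{L^2}\ls\sum_{(\nu_1,\nu_2)\in\mathcal{N}}\|u_{1,\nu_1}u_{2,\nu_2}\|_{L^2}\ls 2^{k}\sum_{(\nu_1,\nu_2)\in\mathcal{N}}\|K_l^{\nu_1}f\|_{L^2}\|K_l^{\nu_2}g\|_{L^2}.
\end{equation*}
Since each $\nu_1$ pairs with only $O(1)$ admissible $\nu_2$, Cauchy--Schwarz combined with the almost-Plancherel identity $\sum_\nu\|K_l^\nu h\|_{L^2}^2\ls\|h\|_{L^2}^2$ collapses the right-hand side to $\ls 2^{k}\|f\|_{L^2}\|g\|_{L^2}$. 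The main obstacle is the surface-measure bound at the heart of the argument: one must verify $|\nabla_{\xi_1}\phi|\sim 1$ uniformly on the relevant cap, which uses the same-sign hypothesis crucially (for opposite signs $|\nabla\phi|$ degenerates to $\sim 2^{k-k_1}$ in this regime), and establish the area bound by exploiting that, although the level set is globally a sphere of radius $\sim 2^{k_1}$, the cap restricts it to a $2^k$-disc---this is exactly what yields the favourable factor $2^k$ in place of the naive $2^{k_1}$ one obtains without any angular localisation.
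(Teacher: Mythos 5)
Your proof is correct, and it takes a genuinely different route from the paper's. The paper tiles the annulus $\{|\xi_1|\sim 2^{k_1}\}$ by cubes of side $2^k$ (the $\Gamma_{k,n}$ projections), splits the $\xi_1$-integral according to which coordinate of $\xi_1$ is dominant, and then performs a change of variables $(\xi_1,\eta)\mapsto(\xi_1+\eta,\la\xi_1\ra_m+\la\eta\ra_m)$; the Jacobian $\bigl|\tfrac{\xi_{1,i}}{\la\xi_1\ra_m}-\tfrac{\eta_i}{\la\eta\ra_m}\bigr|\sim1$ is the analogue of your $|\nabla_{\xi_1}\phi|\sim1$, and the gain of $2^k$ comes from Cauchy--Schwarz in the two residual frequency variables (each contributing $2^{k/2}$), followed by Cauchy--Schwarz over the cube indices. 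You instead use an angular cap decomposition at aperture $2^{-l}=2^{k-k_1}$, pair almost-antipodal caps, and prove a per-pair bound by the standard convolution-on-the-characteristic-surface argument: Cauchy--Schwarz against the surface measure reduces matters to the area of the level set $\{\phi=\tau\}$ inside the cap, and since $\nabla_{\xi_1}\phi$ points essentially radially (so the surface is transverse to the radial direction and roughly tangent to the sphere), the area is governed by the two tangential extents $\sim 2^k$ of the cap, giving $2^{2k}$; the final Cauchy--Schwarz over the $O(1)$-to-$O(1)$ cap pairing gives the clean $\|f\|_{L^2}\|g\|_{L^2}$. Both proofs hinge on exactly the same geometric input (nondegeneracy of the phase gradient for same-sign, high-high-low interactions, which in turn requires $|\xi_1|\gs\la\xi_1\ra_m$, hence the restriction $k_1,k_2\ge1$ when $m>0$). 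Your surface-measure/restriction-type presentation is arguably the more conceptual one; the paper's cube decomposition has the advantage of being entirely elementary (Fubini plus a one-variable change of coordinates) and of matching the $\Gamma_{k,n}$ machinery already set up for the localized Strichartz estimates.
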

\begin{proof}
We may assume $\theta_1=\theta_2=+$. We make a localization by a cube of size $2^k$
	\begin{align*}
	&\| P_k \la P_{k_1}^me^{it\la D\ra_m}f, P_{k_2}^m e^{it\la D\ra_m} g\ra_{\R^4} \|_{L_{t,x}^2(\R^{1+3})}
	\ls \sum_{|n_i|\sim 2^{k_i},|n_1+n_2|\sim 2^k} 
	\| I_{n_1,n_2}\|_{L_{t,x}^2(\R^{1+3})} , \\
	&\text{where}\ I_{n_1,n_2}(t,x) = \la \Gamma_{k,n_1}P_{k_1}^me^{it\la D\ra}f(x), \Gamma_{k,n_2} P_{k_2}^m e^{it\la D\ra} g(x)\ra_{\R^4}.
	\end{align*}	
	We decompose the integral $I_{n_1,n_2}$ into
	\begin{align*}
	I_{n_1,n_2}(t,x)
	&=\int_{\R^3}\int_{\R^3} e^{ix\cdot(\xi+\eta)+it\sqrt{m+|\eta|^2}+it\sqrt{m+|\xi|^2}}
	\la \rho_{k_1}(\xi)\gamma_{k,n_1}(\xi)\widehat{f}(\xi),
	\rho_{k_2}(\eta)\gamma_{k,n_2}(\eta)\widehat{g}(\eta)\ra_{\R^4} d\eta d\xi \\
	&=:I_{n_1,n_2}^1+I_{n_1,n_2}^2+I_{n_1,n_2}^3, \\
	I_{n_1,n_2}^i(t,x)&=\int_{A_i}\int_{\R^3} e^{i(x,t)\cdot(\xi+\eta,\sqrt{m+|\eta|^2}+\sqrt{m+|\xi|^2})}
	\la \rho_{k_1}(\xi)\gamma_{k,n_1}(\xi)\widehat{f}(\xi),
	\rho_{k_2}(\eta)\gamma_{k,n_2}(\eta)\widehat{g}(\eta)\ra_{\R^4} d\eta d\xi.
	\end{align*}
	Here, we chosen almost disjoint sets $A_i$ for $i=1,2,3$ such that  $$\R^3=\cup_{i=1}^3A_i \text{ and } 
	A_i\subset \{ \xi=(\xi_1,\xi_2,\xi_3)\in\R^3: |\xi|\sim |\xi_i| \}.$$ 
	We make the change of variable $(\eta,\xi_i)\mapsto (\xi+\eta,\sqrt{m+|\eta|^2}+\sqrt{m+|\xi|^2})=\zeta=(\zeta_1,\cdots,\zeta_4)$ with 
	\begin{align*}
	d\eta d\xi_i= \Big|\frac{\partial(\zeta_1,\cdots,\zeta_4)}{\partial(\xi_i,\eta_1,\eta_2,\eta_3)}\Big|^{-1} d\zeta.
	\end{align*}
	We estimate the Jacobi on $A_i$	
	\begin{align}\label{bound for jacobi}
     \Big| \frac{\partial(\zeta_1,\cdots,\zeta_d)}{\partial(\xi_i,\eta_1,\eta_2,\eta_3)} \Big|
	= \Big|\frac{\eta_i}{\sqrt{m+|\eta|^2}} - \frac{\xi_i}{\sqrt{m+|\xi|^2}}\Big|
	\sim 1,
	\end{align}
	since $|n_1+n_2|\sim 2^k$.
Now we fix $i=1$. The other cases can be estimated similarly.
We have by Minkowski's inequality
\begin{align*}
\| I_{n_1,n_2}^1\|_{L_{t,x}^2(\R^{1+3})}
&\ls \int_{ \{ |(\xi_2,\xi_3)|\ls 2^k \} } 
 \Big\| \int_{\R} \int_{\R^3} e^{i(x,t)\cdot(\xi+\eta,\sqrt{m+|\eta|^2}+\sqrt{m+|\xi|^2})} \\
 &\qquad\qquad\times \la \rho_{k_1}(\xi)\gamma_{k,n_1}(\xi)\widehat{f}(\xi),
 \rho_{k_2}(\eta)\gamma_{k,n_2}(\eta)\widehat{g}(\eta)\ra_{\R^4} d\eta d\xi_1 \Big\|_{L_{t,x}^2}
d\xi_2d\xi_3. 
\end{align*}	
And by applying Plancherel's theorem with respect to variable $(x,t)$ and reversing the change of variables with \eqref{bound for jacobi} we estimate
\begin{align*}
\| I_{n_1,n_2}^1\|_{L_{t,x}^2(\R^{1+3})}
	\ls \int_{ \{ |(\xi_2,\xi_3)|\ls 2^k \} }  \Big\|  \la \rho_{k_1}(\xi)\gamma_{k,n_1}(\xi)\widehat{f}(\xi),
	\rho_{k_2}(\eta)\gamma_{k,n_2}(\eta)\widehat{g}(\eta)\ra_{\R^4} \Big\|_{L_{\eta,\xi_1}^2} d\xi_2 d\xi_3.
\end{align*}	
And by Cauchy Schwarz inequality we obtain
\begin{align*}
\| I_{n_1,n_2}^1\|_{L_{t,x}^2(\R^{1+3})}
&\ls	2^k  \| \la \rho_{k_1}(\xi)\gamma_{k,n_1}(\xi)\widehat{f}(\xi)
	\rho_{k_2}(\eta)\gamma_{k,n_1}(\eta)\widehat{g}(\eta) \ra_{\R^4} \|_{L_{\eta,\xi}^2} \\
&\ls 2^k \| P_{k_1}^m\Gamma_{k,n_1} f\|_{L^2(\R^3)} \| P_{k_2}^m\Gamma_{k,n_2} g\|_{L^2(\R^3)}.
	\end{align*}
Then we finally have by 
\begin{align*}
	\| P_k \la P_{k_1}^me^{it\la D\ra_m}f, P_{k_2}^m e^{it\la D\ra_m} g \ra_{\R^4} 
	\|_{L_{t,x}^2(\R^{1+3})}
	&\ls \sum_{|n_1+n_2|\sim 2^k}
	2^k \| P_{k_1}^m\Gamma_{k,n_1} f\|_{L^2} \| P_{k_2}\Gamma_{k,n_2} g\|_{L^2} \\
	&\ls 2^k \| P_{k_1}^mf\|_{L^2} \| P_{k_2}^mg\|_{L^2},
\end{align*}
where we used Cauchy Schwarz inequality with respect to $n$ variable.
\end{proof}

\subsection{Transference Principle}
Let $1\leq p<\infty$. We call a finite set $\{t_0,\ldots,t_K\}$ a partition if $-\infty<t_0<t_1<\ldots<t_K\leq \infty$, and denote the set of all partitions by $\mathcal{T}$. A corresponding step-function $a:\R\to L^2(\R^3)$ is called $U^p_{\pm\la D\ra_m}$-atom if
\[
a(t)=\sum_{k=1}^K \mathbf{1}_{[t_{k-1},t_k)} (t)e^{\mp it\la D\ra_m}\varphi_k, \quad \sum_{k=1}^K\|\varphi_k\|_{L^2(\R^3)}^p=1,\quad \{t_0,\ldots,t_K\}\in \mathcal{T},
\]
and $U^p_{\pm\la D\ra}$ is the atomic space. Further, let $V^p_{\pm\la D\ra_m}$ be the space of all right-continuous $v:\R \to L^2(\R^3)$ satisfying
\[
\|v\|_{V_{\pm\la D\ra_m}^p}:=\sup_{\{t_0,\ldots,t_K\}\in \mathcal{T}}\big(\sum_{k=1}^K\|e^{\pm it_k\la D\ra_m}v(t_k)-e^{\pm it_{k-1}\la D\ra_m}v(t_{k-1})\|_{L^2(\R^3)}^p\big)^{\frac1p}.
\]
with the convention $e^{\pm it_K \la D\ra_m}v(t_K)=0$ if $t_K=\infty$.
For the theory of $U^p_{\pm\la D\ra_m}$ and $V^p_{\pm\la D\ra_m}$, see e.g.\
\cite{HHetal-09,HHetal-10,KTetal-14}.

From now, we prove linear and bilinear estimates in $V_{\pm\la D\ra_m}^2$ by transferring the ones for free solutions in the previous section. 

\begin{cor} \label{Cor:localstri}
Let $m\ge0$. Let  $k',k,l \in \Z$, $k'\le k$ and  $2<p<\infty$.  Then we have for $\psi\in V_{\pm\la D\ra_m}^2$	
\begin{equation}\label{ineq:V2}
\Big(\sum_{\nu\in\Omega_l}\sum_{n\in L_{k'}}\| K_{l}^\nu \Gamma_{k',n}P^m_{k} \psi \|_{L_t^p L_x^q}^2\Big)^\frac12
\ls 2^{\frac kp}2^{\frac{k'}{p}} \|P_k^m \psi\|_{V_{\pm\la D\ra_m}^2}.
\end{equation}
\end{cor}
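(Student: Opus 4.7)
The plan is to apply the transference principle: upgrade Lemma~\ref{lem:localstri} to incorporate the angular localizer $K_l^\nu$ and then lift the resulting estimate from free solutions to $V^2$ via the intermediate $U^p$ space.

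First, since $K_l^\nu$ is a Fourier multiplier that commutes with $\Gamma_{k',n}$, $P_k^m$, and $e^{\pm it\la D\ra_m}$, I apply Lemma~\ref{lem:localstri} with $K_l^\nu f$ in place of $f$ for each $\nu$. Squaring in $\nu$ and invoking the almost-orthogonality $\sum_\nu \|K_l^\nu f\|_{L^2}^2 \ls \|f\|_{L^2}^2$, which follows from the finite-overlap property of the partition of unity $\{\kappa_l^\nu\}$ via Plancherel's theorem, I obtain the free-solution version of the claim:
\begin{equation*}
\Big(\sum_{\nu, n} \|K_l^\nu \Gamma_{k',n} P_k^m e^{\pm it\la D\ra_m} f\|_{L^p_t L^q_x}^2\Big)^{\frac12} \ls 2^{\frac{k}{p}} 2^{\frac{k'}{p}} \|f\|_{L^2}.
\end{equation*}

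Second, I transfer this estimate to $V^2$. For a $U^p$-atom $a(t) = \sum_j \mathbf{1}_{I_j}(t)\, e^{\mp it\la D\ra_m}\varphi_j$ with $\sum_j \|\varphi_j\|_{L^2}^p = 1$, the Fourier multipliers commute with the time indicators, and the disjointness of $\{I_j\}$ decouples the $L^p_t$ norm into
\begin{equation*}
\|K_l^\nu \Gamma_{k',n} P_k^m a\|_{L^p L^q}^p = \sum_j \|\mathbf{1}_{I_j} K_l^\nu \Gamma_{k',n} P_k^m e^{\mp it\la D\ra_m}\varphi_j\|_{L^p L^q}^p.
\end{equation*}
Combined with the free-solution bound applied to each $\varphi_j$ and a Minkowski-type interchange between the $\ell^2_{\nu,n}$ and $\ell^p_j$ norms (permissible since $p \ge 2$ gives $\ell^2 \hookrightarrow \ell^p$), one derives the estimate $\|K_l^\nu \Gamma_{k',n} P_k^m a\|_{\ell^2_{\nu,n} L^p L^q} \ls 2^{k/p} 2^{k'/p}$. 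Linearity extends the bound to all $\psi \in U^p_{\pm\la D\ra_m}$, and the standard embedding $V^2_{\pm\la D\ra_m} \hookrightarrow U^p_{\pm\la D\ra_m}$ for $p > 2$ then yields the assertion.

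The principal obstacle is the mixed-norm structure $\ell^2_{\nu,n} L^p_t L^q_x$, which is not of the form $L^p_t(X)$ for a Banach space $X$ as is required by the most classical formulation of the transference principle. The key technical subtlety lies in the atomic step: one must coordinate the atomic-in-time decomposition of $U^p$ with the $\ell^2$-in-frequency aggregation. A naive Minkowski interchange goes in the wrong direction, and one must carefully exploit both the $p \ge 2$ condition and the homogeneity of the $U^p$-atom normalization $\sum_j \|\varphi_j\|^p = 1$.
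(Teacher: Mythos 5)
Your step 2 contains a genuine gap, and you have in fact put your finger on exactly where it is without resolving it. After the time-decoupling of a $U^p$-atom $a = \sum_j \mathbf{1}_{I_j} e^{\mp it\la D\ra_m}\varphi_j$, you are faced with bounding the quantity $\bigl(\sum_{\nu,n}(\sum_j b_{j,\nu,n}^p)^{2/p}\bigr)^{1/2}$, where $b_{j,\nu,n} = \|\mathbf{1}_{I_j}K_l^\nu\Gamma_{k',n}P_k^m e^{\mp it\la D\ra_m}\varphi_j\|_{L^p_tL^q_x}$. The free-solution estimate gives, for each fixed $j$, control of $\bigl(\sum_{\nu,n} b_{j,\nu,n}^2\bigr)^{1/2} \ls 2^{k/p}2^{k'/p}\|\varphi_j\|_{L^2}$, i.e.\ control of the $\ell^p_j\ell^2_{\nu,n}$ norm by $2^{k/p}2^{k'/p}\|\varphi_j\|_{\ell^p_j}$. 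What you actually need is the $\ell^2_{\nu,n}\ell^p_j$ norm. Since $p>2$, Minkowski's inequality gives $\|b\|_{\ell^2_{\nu,n}\ell^p_j} \ge \|b\|_{\ell^p_j\ell^2_{\nu,n}}$ --- the opposite of what you need. Replacing the inner $\ell^p_j$ by $\ell^2_j$ (using $\ell^2\hookrightarrow\ell^p$) does allow you to commute the sums, but then you are left with $\|\varphi_j\|_{\ell^2_j}$ on the right, which the atom normalization $\|\varphi_j\|_{\ell^p_j}\le 1$ does not control for $p>2$. So neither of the two natural manipulations closes, and your closing paragraph essentially concedes this without offering a fix.

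The paper avoids the mixed-norm transference entirely by changing the order of operations. It first applies the scalar transference to each piece separately: using $\widetilde{K}_l^\nu\widetilde{\Gamma}_{k',n}\widetilde{P}_k^m$ in the role of the multiplier in \eqref{ineq:freesolution} and factoring $K_l^\nu\Gamma_{k',n}P_k^m\psi$ through the tilde operators, the atomic argument (which now has a plain $L^p_tL^q_x$ norm on the left) yields
\begin{equation*}
\| K_{l}^\nu \Gamma_{k',n}P_{k}^m \psi \|_{L_t^p L_x^q} \ls 2^{\frac kp}2^{\frac{k'}{p}} \| K_{l}^\nu \Gamma_{k',n}P_{k}^m \psi \|_{V_{\pm\la D\ra_m}^2}
\end{equation*}
for each fixed $(\nu,n)$. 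Only then is the sum over $(\nu,n)$ performed, and it is closed by the almost-orthogonality of the $V^2$ norm under the frequency projections $K_l^\nu\Gamma_{k',n}$, namely $\sum_{\nu,n}\|K_l^\nu\Gamma_{k',n}P_k^m\psi\|_{V^2}^2 \ls \|P_k^m\psi\|_{V^2}^2$, which follows from Plancherel applied inside the definition of $V^2$ (the projections commute with $e^{\pm it\la D\ra_m}$). In short, the $\ell^2_{\nu,n}$ aggregation must be done on the $V^2$ side of the inequality, not on the $L^p_tL^q_x$ side; your plan tries to do it on the wrong side, and the mixed-norm obstacle is the symptom.
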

\begin{proof}
By the atomic structure of $U_{\pm\la D\ra_m}^p$, estimates in $L^2$ for free solutions transfer to $U_{\pm\la D\ra_m}^p$ functions, hence to $V_{\pm\la D\ra_m}^2$. Thus from  \eqref{ineq:freesolution} we have 
\begin{equation}\label{ineq:localv2}
\| \Gamma_{k',n}P_{k}^m \psi \|_{L_t^p L_x^q} 
\ls  2^{\frac kp}2^{\frac{k'}{p}}
\|  \psi \|_{U_{\pm\la D\ra_m}^p} 
\ls 2^{\frac kp}2^{\frac{k'}{p}}
\|  \psi \|_{V_{\pm\la D\ra_m}^2}.
\end{equation}
Actually we can check \eqref{ineq:localv2} also holds with operator changed into $\widetilde{K_l^\nu}\widetilde{P}_k^m$ from which we induce 
\begin{equation*}
\| K_{l}^\nu \Gamma_{k',n}P_{k}^m \psi \|_{L_t^p L_x^q} \ls 2^{\frac kp}2^{\frac{k'}{p}}
\| K_{l}^\nu \Gamma_{k',n}P_{k}^m \psi \|_{V_{\pm\la D\ra_m}^2}
\end{equation*}
Since projection with respect to $\nu$ and $n$ is almost disjoint, the definition of $V_{\pm\la D\ra}^2$ implies
\begin{align*}
\Big(\sum_{\nu}\sum_{n\in L_{k'}}\| K_{l}^\nu \Gamma_{k',n}P_{k}^m \psi \|_{L_t^p L_x^q}^2\Big)^\frac12
&\ls  2^{\frac kp}2^{\frac{k'}{p}}
\Big(\sum_{\nu}\sum_{n\in L_{k'}}\| K_{l}^\nu \Gamma_{k',n}P_{k}^m \psi \|_{ V_{\pm\la D\ra_m}^2 }^2\Big)^\frac12  \\
&\ls 2^{\frac kp}2^{\frac{k'}{p}} \| P_k^m \psi \|_{V_{\pm \la D\ra_m}^2}.
\end{align*}
\end{proof}

We obtain bilinear estimates in $V^2$ from above linear one.
\begin{cor}\label{Cor:bilinear}
	Let $m\ge0$, $\theta_1,\theta_2\in\{1,-1\}$ and $k_1,k_2\in\Z$. Then for $\psi_i\in V_{\theta_i\la D\ra}^2$ satisfying $\Pi_{\theta_i}^m(D)P_{k_{i}}^m\psi_i=\psi_i$, $i=1,2$.
	Then we have for $0<s<\frac12$
	\begin{equation}\label{bilinear:HighLow}
	\| \la \psi_{1}, \beta\psi_{2} \ra_{\C^4} \|_{L^2(\R^{1+3})}
	\ls 2^{sk_1}2^{sk_2} 2^{(1-2s)\min(k_1,k_2)}
	\|\psi_{1}\|_{V_{\theta_1 \la D\ra_m}^2}
	\|\psi_{2}\|_{V_{\theta_2 \la D\ra_m}^2}.
	\end{equation}
\end{cor}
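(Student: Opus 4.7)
The plan is to combine a Fourier cube decomposition at the smaller input scale with the localized Strichartz bound from Corollary~\ref{Cor:localstri} via Hölder's inequality. Without loss of generality $k_1 \le k_2$. Write $\psi_i = \sum_{n_i \in L_{k_1}} \Gamma_{k_1, n_i}\psi_i$. Because the Fourier transform of $\la \Gamma_{k_1, n_1}\psi_1, \beta \Gamma_{k_1, n_2}\psi_2\ra_{\C^4}$ is concentrated in an $O(2^{k_1})$-cube located near $n_2 - n_1$ (owing to the conjugate-linear structure of the pairing), $L^2$-orthogonality on output cubes combined with a Cauchy--Schwarz in $n_1$---which costs only a constant, since $\psi_1$ is Fourier-supported in an annulus covered by $O(1)$ cubes of size $2^{k_1}$---yields
\begin{equation*}
\|\la\psi_1, \beta\psi_2\ra\|_{L^2_{t,x}}^2 \ls \sum_{n_1, n_2}\bigl\|\la \Gamma_{k_1, n_1}\psi_1, \beta\Gamma_{k_1, n_2}\psi_2\ra\bigr\|_{L^2_{t,x}}^2.
\end{equation*}

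Next I pick Hölder exponents $(p_i, q_i)$ with $1/p_i + 1/q_i = 1/2$ (the relation appearing in Corollary~\ref{Cor:localstri}) and $1/p_1 + 1/p_2 = 1/2 = 1/q_1 + 1/q_2$, so that Hölder lands in $L^2_{t,x}$. Applied termwise, this factors the double sum into a product of square sums, and each factor is controlled by Corollary~\ref{Cor:localstri} with $k' = k_1$:
\begin{equation*}
\sum_{n_i} \|\Gamma_{k_1, n_i} P_{k_i}^m \psi_i\|_{L^{p_i}L^{q_i}}^2 \ls 2^{2(k_i + k_1)/p_i}\|\psi_i\|_{V^2_{\theta_i\la D\ra_m}}^2.
\end{equation*}
Multiplying the two, taking square roots, and using $1/p_1 + 1/p_2 = 1/2$ collapses the exponent to $(1-s)k_1 + s k_2$ upon setting $s := 1/p_2$. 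As $p_2$ ranges over $(2, \infty)$ (the admissible range of Corollary~\ref{Cor:localstri}), $s$ ranges over $(0, 1/2)$, matching the claimed range exactly.

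The main obstacle is the orthogonality bookkeeping: making precise the Fourier-support analysis of $\la \Gamma\psi_1, \beta\Gamma\psi_2\ra_{\C^4}$ (output located in a cube of size $O(2^{k_1})$ near $n_2 - n_1$ because of the complex conjugation in the pairing), checking that only $O(1)$ compatible pairs feed any fixed output cube, and absorbing the angular $\nu$-sum in Corollary~\ref{Cor:localstri} using the trivial bound $|\Omega_0| = O(1)$. As a refinement that covers the low-output regime $k \le k_1 - C$ (which forces $k_1 \sim k_2$) more sharply, one can deploy Lemma~\ref{Null} at angular scale $l = k_1 - k$ to extract an additional $2^{-l}$ factor from the null structure and combine it with Lemma~\ref{Lem:bilinear for free} transferred to $V^2$ via the atomic structure of $U^2$, which produces the per-dyadic bound $\|P_k \la\psi_1,\beta\psi_2\ra\|_{L^2} \ls 2^{2k - k_1}\|\psi_1\|_{V^2}\|\psi_2\|_{V^2}$; $\ell^2$-summation over $k \le k_1$ then recovers the same contribution as the unified approach, so the main estimate is not obstructed by this regime.
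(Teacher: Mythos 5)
Your argument is correct but substantially over-engineered compared to what the paper does. The paper's proof is a one-liner: apply Corollary~\ref{Cor:localstri} with $k'=k$, which gives the \emph{global} (no cube decomposition) Strichartz bound $\|P_k^m\psi\|_{L^p_tL^q_x}\ls 2^{2k/p}\|P_k^m\psi\|_{V^2_{\pm\la D\ra_m}}$ for $\frac1p+\frac1q=\frac12$, and then apply the asymmetric H\"older inequality $L^2_{t,x}\subset L^p_tL^q_x\cdot L^q_tL^p_x$ with one factor assigned $(p,q)$ and the other $(q,p)$. Setting $\frac2p=s$ and choosing which factor receives which pair according to the sign of $k_1-k_2$ yields exactly the claimed bound. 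No cube decomposition, no orthogonality bookkeeping.

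Your route adds a Fourier-cube decomposition at scale $2^{\min(k_1,k_2)}$, argues almost-orthogonality of the outputs, and then recombines via Cauchy--Schwarz in the cube indices so that the localized Strichartz estimate can be applied termwise. Tracing the exponents, the two approaches collapse to the same bound---the gain $2^{2(k_1+k_2)/p_2}$ versus $2^{4k_2/p_2}$ from the cube localization is precisely cancelled by the recombination step, and both land at $2^{(1-s)\min(k_1,k_2)}2^{s\max(k_1,k_2)}$ modulo relabelling. So the cube decomposition is genuinely redundant for this \emph{unlocalized} bilinear estimate: it buys you nothing here because there is no output frequency projection $P_k$ to exploit. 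That machinery becomes necessary in the subsequent Propositions (the high-high-low bounds with a small output scale $2^k\ll 2^{k_1}$), where the additional angular/cube localization and the null structure of Lemma~\ref{Null} produce a genuine gain of $2^{k-k_1}$. Your final paragraph, invoking exactly those tools for the regime $k\le k_1-C$, is aimed at that other estimate; it isn't needed to prove the present Corollary, which has no $P_k$ on the left.

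One more thing to flag: you fixed the roles of $\psi_1,\psi_2$ by assuming $k_1\le k_2$ at the outset, and the paper's phrasing implicitly does the same (it writes $\|\psi_1\|_{L^p_tL^q_x}\|\psi_2\|_{L^q_tL^p_x}$ and gets $2^{sk_1}2^{(1-s)k_2}$, which only matches the stated $2^{sk_1}2^{sk_2}2^{(1-2s)\min(k_1,k_2)}$ when $k_1\ge k_2$). Both arguments should make the symmetrization explicit---it is immediate, but worth a sentence.
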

\begin{proof}
Letting \eqref{ineq:V2} with $k'=k$ gives
\begin{align*}
\| P_k^m \psi \|_{L_t^pL_x^q} \ls 2^\frac{2k}{p} \| P_k^m\psi\|_{V_{\pm\la D\ra_m}^2},
\end{align*}
for $2<p,q<\infty$ satisfying $\frac1p+\frac1q=\frac12$.
Using this, we have 
\begin{align*}
\| \la \psi_{1}, \beta\psi_{2} \ra_{\C^4} \|_{L^2(\R^{1+3})}
\ls \|\psi_1\|_{L_t^p L_x^q} \|\psi_2\|_{L_t^qL_x^p}
\ls 2^{\frac{2}{p}k_1} 2^{(1-\frac{2}{p})k_2} \|\psi_1\|_{V_{\theta_1\la D\ra_m}^2} \|\psi_2\|_{V_{\theta_2\la D\ra_m}^2},
\end{align*}
which implies \eqref{bilinear:HighLow} if we let $\frac2p=s$.
\end{proof}

If we apply the null structure in Dirac operator induced by projection operator $\Pi_{\pm}^m(D)$ we can obtain improved one for high-high-low interaction case. 
\begin{pro} Let $m\ge0$. Suppose $\theta_1\theta_2=1$ and $\begin{cases}
	k\in\Z \text{ and } k_1,k_2,\in \N \ &\text{ if } \ m>0 \\
	k,k_1,k_2 \in \Z &\text{ if } \ m=0
	\end{cases}$
satisfy $2^k\ll 2^{k_1}\sim 2^{k_2}$.	Then
 for $\psi_i\in V_{\theta_i\la D\ra}^2$ satisfying $\Pi_{\theta_i}^m(D)P_{k_{i}}^m\psi_i=\psi_i$, $i=1,2$ we have  
	\begin{equation}\label{bilinear:hhl:high1}
	\|P_{k} \la \psi_{1}, \beta \psi_{2} \ra_{\C^4} \|_{L^2(\R^{1+3})}
	\ls 2^{\frac{3k}{2}-\frac{k_1}{2}}
	\|\psi_{1}\|_{V_{\theta_1 \la D\ra_m}^2}
	\|\psi_{2}\|_{V_{\theta_2 \la D\ra_m}^2}.
	\end{equation}
\end{pro}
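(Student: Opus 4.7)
The plan is to implement the null-structure gain from Lemma~\ref{Null} via a frequency cube decomposition at the output scale $2^k$, combined with the localized Strichartz estimate of Corollary~\ref{Cor:localstri}; the cubes at scale $2^k$ inside the $2^{k_1}$-annulus automatically realize an angular decomposition at the critical scale $l=k_1-k$.

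First, I decompose $\psi_i=\sum_{n_i\in L_k}\Gamma_{k,n_i}\psi_i$ and expand
\[
P_k\la\psi_1,\beta\psi_2\ra_{\C^4}=\sum_{n_1,n_2\in L_k}P_k\la\Gamma_{k,n_1}\psi_1,\beta\Gamma_{k,n_2}\psi_2\ra_{\C^4}.
\]
The Fourier supports $|\eta|\sim 2^{k_1}$, $|\eta-\xi|\sim 2^{k_2}$ together with the $P_k$-localization restrict the sum to admissible pairs with $|\theta_1 n_1-\theta_2 n_2|\ls 2^k$; for each $n_1$ lying in the annulus of thickness $2^{k_1}$ this leaves $O(1)$ compatible choices of $n_2$.

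Second, a cube $\Gamma_{k,n_i}$ inside the $2^{k_1}$-annulus is supported in an angular sector of opening $\sim 2^{k-k_1}=2^{-l}$ about $n_i/|n_i|$. Setting $l=k_1-k$ and dominating each cube by a finite sum of sectors $K_l^\nu$ at the matching scale, admissibility forces the associated angular centers to satisfy $|\theta_1\xi_l^{\nu_1}-\theta_2\xi_l^{\nu_2}|\ls 2^{-l}$, which is exactly the hypothesis of Lemma~\ref{Null}. Consequently the bilinear matrix symbol $\Pi_{-\theta_2}^m(\eta-\xi)\Pi_{\theta_1}^m(\eta)$ appearing in the Fourier-side representation has operator norm $\ls 2^{-l}=2^{k-k_1}$ uniformly on the joint Fourier support. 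Extracting this pointwise factor and estimating the remainder by H\"older's inequality ($L^4_tL^4_x\cdot L^4_tL^4_x\to L^2_{t,x}$) yields
\[
\|P_k\la\Gamma_{k,n_1}\psi_1,\beta\Gamma_{k,n_2}\psi_2\ra_{\C^4}\|_{L^2_{t,x}}\ls 2^{k-k_1}\|\Gamma_{k,n_1}\psi_1\|_{L^4_tL^4_x}\|\Gamma_{k,n_2}\psi_2\|_{L^4_tL^4_x}.
\]

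Third, I sum in $(n_1,n_2)$. Since the pairing is essentially one-to-one, Cauchy--Schwarz reduces matters to the $\ell^2(L_k)$-summation of cube-localized $L^4_tL^4_x$ norms. Applying Corollary~\ref{Cor:localstri} with $p=q=4$, cube scale $k'=k$, and input frequency $k_1$ gives
\[
\Big(\sum_{n\in L_k}\|\Gamma_{k,n}\psi_i\|_{L^4_tL^4_x}^2\Big)^{1/2}\ls 2^{(k_1+k)/4}\|\psi_i\|_{V^2_{\theta_i\la D\ra_m}},\qquad i=1,2,
\]
so that assembling the estimates produces
\[
\|P_k\la\psi_1,\beta\psi_2\ra_{\C^4}\|_{L^2_{t,x}}\ls 2^{k-k_1}\cdot 2^{(k+k_1)/2}\,\|\psi_1\|_{V^2_{\theta_1\la D\ra_m}}\|\psi_2\|_{V^2_{\theta_2\la D\ra_m}}=2^{3k/2-k_1/2}\|\psi_1\|_{V^2_{\theta_1\la D\ra_m}}\|\psi_2\|_{V^2_{\theta_2\la D\ra_m}},
\]
as claimed.

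The main technical hurdle I anticipate is transporting the pointwise matrix bound of Lemma~\ref{Null} (stated for the spherical sector decomposition $K_l^\nu$) to the bilinear Fourier multiplier restricted to the cubes $\Gamma_{k,n}$; this should follow routinely from dominating each cube by an $O(1)$ sum of sectors at the matching angular scale, but one must verify that the symbol gain genuinely pulls out of the bilinear integral. The choice of Strichartz pair $(p,q)=(4,4)$ is not accidental: it allocates the factor $2^{(k+k_1)/2}$ symmetrically between the two inputs, and multiplying by the null gain $2^{k-k_1}$ produces precisely the stated exponent $3k/2-k_1/2$.
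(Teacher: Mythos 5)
Your proposal is correct and follows the paper's approach in all essentials: cube decomposition at the output scale $2^k$, extraction of the null gain $2^{-l}\sim 2^{k-k_1}$ via Lemma~\ref{Null}, and the localized $L^4_tL^4_x$ estimate from Corollary~\ref{Cor:localstri} followed by Cauchy--Schwarz over the cube labels. The one place you diverge from the paper is a small streamlining: the paper performs an explicit double decomposition — cubes $\Gamma_{k,n}$ together with angular sectors $K_l^\nu$ at scale $2^{-l}\sim 2^{k-k_1}$ — and then invokes Corollary~\ref{Cor:localstri} in its full double-sum form, whereas you observe that a cube of side $2^k$ sitting in the annulus $|\eta|\sim 2^{k_1}$ already occupies an angular sector of width $\sim 2^{k-k_1}$, so the sector decomposition is subsumed by the cube decomposition and only the $n$-sum survives. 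This is a legitimate simplification and yields the identical bound, since the $\nu$-sum contributes only $O(1)$ terms per cube. The technical point you flag — that the pointwise symbol bound of Lemma~\ref{Null} transports from the discrete sector centers $\xi_l^\nu$ to the full bilinear Fourier multiplier on the cube — is indeed the content delegated to the cited reference in the paper's proof as well, and your treatment of it is at the same level of rigor as the paper's.
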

\begin{proof}
We divide the left-hand side applying cube $n\in L_{k}$ and cone $\mathcal{K}_l^\nu$, $\nu\in\Omega_l$  decomposition with $2^{-l}\sim2^{k-\max(k_1,k_2)}$. Note that by the support condition in this case, the summation over cube is restricted to $|n-n'|\preceq k$ and the summation over cone to $|\theta_1\xi_{l}^\nu-\theta_2\xi_{l}^{\nu'}|=|\xi_1-\xi_2|\sim 2^{-l}$.
	\begin{align*}
	&\|P_{k} \la \psi_{1}, \beta_j \psi_{2} \ra_{\C^4} \|_{L^2(\R^{1+3})} \\
	&\ls 
	\sum_{\substack{n,n'\in L_{k}\\
			|n-n'|\preceq k}}  
	\sum_{\substack{\xi_{l}^{\nu},\xi_{l}^{\nu'}\in\Omega_j\\
			|\theta_1\xi_{l}^{\nu}-\theta_2\xi_{l}^{\nu}|\ls 2^{-l}}}
	\| P_{k} \la K_l^\nu \Gamma_{k,n} \psi_{1}, \beta_j  K_l^{\nu'}\Gamma_{k,n'}\psi_{2}\ra_{\C^4} \|_{L^2(\R^{1+3})} \\
	&\ls 
	\sum_{\substack{n,n'\in L_{k}\\
			|n-n'|\preceq k}}  
\sum_{\substack{\xi_{l}^{\nu},\xi_{l}^{\nu'}\in\Omega_j\\
		|\theta_1\xi_{l}^{\nu}-\theta_2\xi_{l}^{\nu}|\ls 2^{-l}}}
	2^{-l}\|  K_l^{\nu}\Gamma_{k,n} \psi_{1}\|_{L^4(\R^{1+3})} \|K_l^{\nu'}\Gamma_{k,n'}\psi_{2} \|_{L^4(\R^{1+3})},
	\end{align*}
	where in the last inequality null structure is exploited by Lemma~\ref{Null} since $\Pi_{\theta_i}^m(D)\psi_i=\psi_i$ and $2^{k_i}>1$.  
	Applying the Cauchy-Schwarz inequality with $n,n'$ and $\nu,\nu'$ and then using \eqref{ineq:V2} we finally get
	\begin{align*}
	\|P_{k} \la \psi_{1}, \beta_j \psi_{2} \ra_{\C^4} \|_{L^2(\R^{1+3})}\ls  2^{k-k_1} 2^\frac{k}{2}2^{\frac{k_1}{2}} \|\psi_{1,k_1}\|_{V_{\theta_i \la D\ra}^2}
	\|\psi_{2,k_2}\|_{V_{\theta_i \la D\ra}^2} .
	\end{align*}
\end{proof}

Next one follow from bilinear estimates in Lemma~\ref{Lem:bilinear for free}.
\begin{pro}\label{Prop;V2}
	Let $m\ge0$. Suppose $\theta_1\theta_2=-1$ and $\begin{cases}
	k\in\Z \text{ and } k_1,k_2\in \N,   &\text{ if } m>0 \\
	k,k_1,k_2\in \Z,  &\text{ if } m=0
	\end{cases}$ satisfy $2^k\ll 2^{k_1}\sim 2^{k_2}$.   	Then
	for $\psi_i\in V_{\theta_i\la D\ra}^2$ satisfying $\Pi_{\theta_i}^m(D)P_{k_{i}}^m\psi_i=\psi_i$, $i=1,2$ we have 
\begin{equation}\label{bilinear:hhl:high2}
\|P_{k} \la \psi_{1}, \beta \psi_{2} \ra_{\C^4} \|_{L^2(\R^{1+3})}
\ls 2^{k}
\|\psi_{1}\|_{V_{\theta_1 \la D\ra_m}^2}
\|\psi_{2}\|_{V_{\theta_2 \la D\ra_m}^2}.
\end{equation}	
\end{pro}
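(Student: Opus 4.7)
The plan is to reduce the opposite-sign case to the same-sign bilinear estimate of Lemma~\ref{Lem:bilinear for free} by a complex-conjugation trick and then to transfer from free solutions to $V^2$. The first step rewrites the sesquilinear $\C^4$-inner product as a real bilinear pairing: $\la\psi_1,\beta\psi_2\ra_{\C^4}=\la\bar\psi_1,\beta\psi_2\ra_{\R^4}$. Complex conjugation reverses the temporal exponential, $\overline{e^{-it\theta\la D\ra_m}f}=e^{it\theta\la D\ra_m}\bar f$, and preserves both the frequency support (because $P_k^m$ has an even symbol, so $\overline{P_k^m f}=P_k^m\bar f$) and the $V^2$ norm, with $V^2_{\theta\la D\ra_m}$ mapped isometrically onto $V^2_{-\theta\la D\ra_m}$ under $v\mapsto\bar v$. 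Since $\theta_1\theta_2=-1$, we have $-\theta_1=\theta_2$, so $\bar\psi_1$ and $\psi_2$ now propagate in the same temporal direction.

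Next I absorb $\beta$ into the second factor: $\beta\psi_2$ has the same frequency support and $V^2$ norm as $\psi_2$ because $\beta$ is a constant diagonal matrix that commutes with $P_k^m$ and with the free evolution. The bilinear form becomes $\|P_k\la\bar\psi_1,\beta\psi_2\ra_{\R^4}\|_{L^2_{t,x}}$, which for free solutions $\psi_i=e^{-it\theta_i\la D\ra_m}f_i$ matches the same-sign hypothesis of Lemma~\ref{Lem:bilinear for free} and yields the bound $\ls 2^k\|\bar f_1\|_{L^2}\|\beta f_2\|_{L^2}=2^k\|f_1\|_{L^2}\|f_2\|_{L^2}$. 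Note that the $\Pi_{\theta_i}^m(D)$-projection condition on $\psi_i$ plays no role here, since Lemma~\ref{Lem:bilinear for free} only requires the frequency localization $P_{k_i}^m$, which is preserved under conjugation and multiplication by $\beta$.

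Finally, I transfer from free solutions to $V^2$. By the atomic decomposition of $U^2_{\theta_i\la D\ra_m}$, the free-solution estimate extends linearly to give $\|P_k\la\psi_1,\beta\psi_2\ra_{\C^4}\|_{L^2_{t,x}}\ls 2^k\|\psi_1\|_{U^2_{\theta_1\la D\ra_m}}\|\psi_2\|_{U^2_{\theta_2\la D\ra_m}}$, and the standard embedding $V^2\hookrightarrow U^p$ for $p>2$ combined with interpolation yields the desired $V^2$-version. The main subtlety lies in this transference step: unlike the $L^p_tL^q_x$ Strichartz outputs with $p>2$ handled in Corollary~\ref{Cor:localstri}, the $L^2_{t,x}$ bilinear output does not transfer cleanly via $V^2\hookrightarrow U^p$, so one must either dualize at the $U^2$ level or accept a mild $\epsilon$-loss absorbable into the gap $2^k\ll 2^{k_1}$; aside from this routine-but-delicate point, the structural heart of the argument is the conjugation that converts the opposite-sign regime (where a direct Jacobian analysis as in the proof of Lemma~\ref{Lem:bilinear for free} would degenerate, since $\xi_i/\la\xi\ra_m+\eta_i/\la\eta\ra_m\to 0$ when $\xi+\eta$ is small) into the same-sign setting where the Jacobian is non-degenerate.
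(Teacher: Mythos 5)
Your reduction to the same-sign case by complex conjugation is exactly the paper's opening move: $\langle\psi_1,\beta\psi_2\rangle_{\C^4}=\langle\bar\psi_1,\beta\psi_2\rangle_{\R^4}$, together with $\overline{e^{\pm it\langle D\rangle_m}\psi}=e^{\mp it\langle D\rangle_m}\bar\psi$ and hence $\|\psi\|_{V^2_{\pm\langle D\rangle_m}}=\|\bar\psi\|_{V^2_{\mp\langle D\rangle_m}}$. This part is correct and matches the paper.

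The genuine gap is the transference step, which you flag but do not resolve. You can indeed pass from free solutions to $U^2_{\theta_1}\times U^2_{\theta_2}$ by atomic decomposition in both factors, but the required final input is $V^2$, and $V^2\not\hookrightarrow U^2$; only $V^2\hookrightarrow U^p$ for $p>2$ is available. An $L^2_{t,x}$ bilinear output behaves like a "$U^2$-level" quantity, so the embedding chain you invoke does not close. Neither of your suggested patches is carried out: dualizing via $(U^p)^*\simeq V^{p'}$ does not obviously produce a bilinear $L^2_{t,x}$ bound, and the $\epsilon$-loss interpolation route would require a separate lemma that you don't state. The paper (Appendix, proof of Proposition~\ref{Prop;V2}) bridges this precisely: for real-valued $\psi$ set $T(f)=\big(\omega_k\ast\langle Pf,Pf\rangle_{\R^4}\big)^{1/2}$ with $\omega_k=(\widecheck\rho_{\le k+1})^2\ge0$, observe that $T$ (hence $n(f)=\|T(f)\|_{L^4_x}$) is subadditive, and note $\|P_{\le k}\langle P\psi,P\psi\rangle_{\R^4}\|_{L^2_{t,x}}\sim\|n(\psi(t))\|_{L^4_t}^2$. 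The $L^4_t$-norm of $n$ transfers from free solutions to $U^4$-atoms by sublinearity over the time partition, then to all of $U^4$ by subadditivity, and finally $V^2\hookrightarrow U^4$ gives the $V^2$ bound without loss. The general (off-diagonal, complex) case is then recovered by a polarization argument with $\phi_\pm=\phi_1\pm\phi_2$ and $\widetilde\phi_\pm=-i\phi_1\pm\phi_2$. This subadditivity-plus-polarization mechanism is the technical heart of the proposition and is absent from your proposal.
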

\begin{proof}
 We suffices to show that for $\psi_1\in V_{\theta_1\la D\ra_m}^2, \psi_2\in V_{\theta_2\la D\ra_m}^2$		
\begin{align*}
\| P_k  \la P_{k_1}^m\psi_1 , \beta P_{k_2}^m \psi_2 \ra_{\C^4}  \|_{L_{t,x}^2(\R^{1+3})}
\ls 2^k \| \psi_1\|_{V_{\theta_1\la D\ra_m}^2} \| \psi_2\|_{V_{\theta_2\la D\ra_m}^2}
\end{align*}
	Since $e^{\pm i t\la D\ra} \psi = \overline { e^{\mp i t\la D\ra} \bar{ \psi} } $ ,
	we have $\| \psi \|_{V_{\pm\la D\ra}^2} = \| \bar \psi \|_{V_{\mp\la D\ra}^2}$.
	Thus it reduces to show that
	\begin{align*}
	\| P_k  \la P_{k_1}^m\psi_1 , \beta P_{k_2}^m \psi_2 \ra_{\R^4} \|_{L_{t,x}^2(\R^{1+3})}
	\ls 2^k \| \psi_1\|_{V_{\pm\la D\ra_m}^2} \| \psi_2\|_{V_{\pm\la D\ra_m}^2},
	\end{align*}
	where $\la \psi,\phi \ra_{\R^4} = \sum_{i=1}^4 \psi_i\phi_i $.	
	This follows from a slight modification of the proof in \cite[Proposition~2.5]{HY-2017} by using \eqref{Bilinear estimate}.	
We give a explicit proof in Appendix for convenience of readers. 
\end{proof}

Collecting all the cases we prove above, we arrange them in the following Corollary.
\begin{cor}\label{Cor:bilinear2}
Let $m\ge0$, $\theta_1,\theta_2\in\{-1,1\}$. Suppose $k,k_1,k_2\in\Z$ satisfy $2^k \ls 2^{k_1}\sim 2^{k_2}$. For $\psi_i\in V_{\theta_i\la D\ra}^2$ satisfying $\Pi_{\theta_i}(D)P_{k_{i}}^m\psi_i=\psi_i$, $i=1,2$ we have
\begin{equation}\label{bilinear:hhl:high}
\|P_{k} \la \psi_{1}, \beta\psi_{2} \ra_{\C^4} \|_{L^2(\R^{1+3})}
\ls B_m(k)
\|\psi_{1}\|_{V_{\theta_1 \la D\ra}^2}
\|\psi_{2}\|_{V_{\theta_2 \la D\ra}^2},
\end{equation}
where $B_m(k)=\begin{cases}
2^\frac{k}{2} &\text{ if } m>0\ \& \  \min(k_1,k_2)=0 \\
2^k &\text{ otherwise }.
\end{cases}$
\end{cor}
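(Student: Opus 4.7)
The proof is a case analysis that combines the two null-structure estimates \eqref{bilinear:hhl:high1} and \eqref{bilinear:hhl:high2} with a separate direct argument needed only for the massive low-frequency regime. The two relevant axes are the sign $\theta_1\theta_2\in\{1,-1\}$, which selects the applicable proposition, and whether the hypotheses $k_1,k_2\ge 1$ of those propositions are actually satisfied when $m>0$.

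First I handle the range in which either $m=0$, or $m>0$ with $\min(k_1,k_2)\ge 1$, so that the null-structure propositions apply as stated. If $\theta_1\theta_2=1$, \eqref{bilinear:hhl:high1} gives the bound $2^{3k/2-k_1/2}=2^k\cdot 2^{(k-k_1)/2}\le 2^k$, using $2^k\ls 2^{k_1}$; if $\theta_1\theta_2=-1$, \eqref{bilinear:hhl:high2} delivers $2^k$ directly. Either way $B_m(k)=2^k$, which is the bound claimed by the corollary.

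The remaining case is $m>0$ with $\min(k_1,k_2)=0$, which forces $k_1=k_2=0$ and $k\le 0$. This is the main obstacle, since the angular null-gain breaks down at unit frequencies: the constraint $l\le\min(k_1,k_2)+10$ in Lemma~\ref{Null} only permits $2^{-l}\gs 1$, which is no gain at all. Instead I exploit transversality via a cube decomposition at scale $2^k$: writing $\psi_i=\sum_{n_i\in L_k}\Gamma_{k,n_i}\psi_i$, the Fourier support of each product $\la\Gamma_{k,n_1}\psi_1,\beta\Gamma_{k,n_2}\psi_2\ra$ concentrates near $n_2-n_1$ with uncertainty $O(2^k)$, so the outer $P_k$ projection restricts the double sum to pairs with $|n_2-n_1|\ls 2^k$, and for each $n_1$ only $O(1)$ values of $n_2$ contribute. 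H\"older with $L^4_{t,x}\cdot L^4_{t,x}\hookrightarrow L^2_{t,x}$, followed by Cauchy--Schwarz in $n_1$, reduces the task to
\[
\Big(\sum_{n\in L_k}\|\Gamma_{k,n}P_{\le 0}\psi_i\|_{L^4_{t,x}}^2\Big)^{1/2}\ls 2^{k/4}\|\psi_i\|_{V_{\theta_i\la D\ra_m}^2},
\]
which is exactly Corollary~\ref{Cor:localstri} applied with $(p,q)=(4,4)$, frequency scale $k=0$, and cube scale $k'=k$. Multiplying the two factors of $2^{k/4}$ yields $2^{k/2}\|\psi_1\|_{V_{\theta_1\la D\ra_m}^2}\|\psi_2\|_{V_{\theta_2\la D\ra_m}^2}$, matching $B_m(k)=2^{k/2}$ and completing the remaining case.
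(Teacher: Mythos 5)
Your proof is correct, and for the one genuinely nontrivial case (the massive low-frequency regime $m>0$, $\min(k_1,k_2)=0$) it takes a different route from the paper. The paper's argument there is global and short: apply Bernstein's inequality $\|P_k\cdot\|_{L^2_x}\lesssim 2^{k/2}\|P_k\cdot\|_{L^{3/2}_x}$ to pull out the factor $2^{k/2}$, then H\"older into $\|\psi_1\|_{L^4_tL^3_x}\|\psi_2\|_{L^4_tL^3_x}$, and close with the Klein--Gordon Strichartz estimate of Lemma~\ref{lem:stri} (admissible pair $(p,q)=(4,3)$, bounded Strichartz constant since $k_i\lesssim 1$) together with $V^2\hookrightarrow U^4$. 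You instead run a cube decomposition at scale $2^k$, exploit the Fourier-support constraint $|n_1-n_2|\lesssim 2^k$ forced by the outer $P_k$, apply H\"older $L^4_{t,x}\cdot L^4_{t,x}\to L^2_{t,x}$, and sum by Cauchy--Schwarz using the localized $L^4_{t,x}$ estimate from Corollary~\ref{Cor:localstri} with frequency scale $0$ and cube scale $k$, obtaining $2^{k/4}\cdot 2^{k/4}=2^{k/2}$. The two arguments are of comparable length and give the same bound; yours has the minor structural advantage of reusing the $m$-uniform localized Strichartz machinery already set up (Corollary~\ref{Cor:localstri}) rather than invoking the separate massive Strichartz lemma, and it mirrors the cube-decomposition technique used in Lemma~\ref{Lem:bilinear for free} and Proposition~\ref{Prop;V2}, so it fits more naturally into the paper's toolkit. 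One small imprecision: when $m>0$ and $\min(k_1,k_2)=0$, the condition $2^{k_1}\sim 2^{k_2}$ only forces $k_1,k_2=O(1)$, not literally $k_1=k_2=0$; this does not affect the argument since the Strichartz constants remain $O(1)$. Similarly, in the first part you apply the two propositions which are stated with $2^k\ll 2^{k_1}$, whereas the corollary allows $2^k\lesssim 2^{k_1}$; the borderline case $2^k\sim 2^{k_1}\sim 2^{k_2}$ is easy (plain H\"older $L^4_{t,x}\cdot L^4_{t,x}$ and the non-localized $L^4_{t,x}$ estimate already give $2^{k_1/2}2^{k_2/2}\sim 2^k$, or use Corollary~\ref{Cor:bilinear}), and the paper's phrasing "collecting all the cases" leaves this equally implicit.
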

\begin{proof}
For \eqref{bilinear:hhl:high} where $m>0$ and one of $k_1,k_2$ is zero, 
we apply Bernstein inequality 
\begin{align*}
\|P_{k} \la \psi_{1}, \beta_j \psi_{2} \ra_{\C^4} \|_{L^2(\R^{1+3})}
&\ls 2^{\frac{k}{2}}
\|P_{k} \la \psi_{1}, \beta_j \psi_{2} \ra_{\C^4} \|_{L_t^2L_x^{\frac32}(\R^{1+3})}\\
&\ls 2^{\frac{k}{2}}\|\psi_{1}\|_{L_t^4 L_x^3} \|\psi_{2}\|_{L_t^4 L_x^3}.
\end{align*}	
Applying \eqref{lem:stri} and the inclusion
$V_{\pm\la D\ra}^2\hookrightarrow U_{\pm\la D\ra}^p$ for $p>2$ gives the desired result.
\end{proof}


\section{Proof of Main results}

Our resolution space $X_{\pm}^{s}$ corresponding to the Sobolev spaces regularity $s$ is the space of functions in $C(\R,H^s(\R^3;\C^4))$ such that
\begin{equation*}
X_{\pm}^{s}:=\left\{
\| u\|_{X_{\pm}^{s}}:=\bigg(\sum_{k\in\Z }
 2^{2sk}   \| P_k^m  u\|_{U_{\pm\la D\ra_m}^2}^2 \bigg)^\frac12 <\infty
\right\}
\end{equation*}
For given $\psi\in H^s(\R^3)$ we decompose $\psi=\Pi_+^m(D)\psi+\Pi_-^m(D)\psi=:\psi_+ + \psi_-$ and define $X^s$ by
\begin{equation*}
\psi\in X^s \Longleftrightarrow (\psi_{+},\psi_{-})\in X_{+}^s\times X_{-}^s \text{ and }
\| \psi\|_{X^s}:= \| \psi_+\|_{X_+^s} +\| \psi\|_{X_-^s}.
\end{equation*}

It suffices to consider positive times. We will represent a solution of \eqref{system} using the Duhamel's formula on $[0,\infty]$
\begin{align}\label{duhamel}
\begin{aligned}
\psi_{+}(t)&=e^{-it\la D\ra_m}\Pi_{+}^m(D)\psi_0 + \sum_{\theta_1,\theta_2,\theta_3\in\{\pm\}}i\int_0^t e^{-i(t-t')\la D\ra_m}\Pi_{+}^m(D) \bigg( V*\la \psi_{\theta_1},\beta\psi_{\theta_2}\ra \beta\psi_{\theta_3} \bigg) (t') dt'\\
\psi_{-}(t)&=e^{it\la D\ra_m}\Pi_{-}^m(D)\psi_0 +\sum_{\theta_1,\theta_2,\theta_3\in\{\pm\}}i\int_0^t e^{i(t-t')\la D\ra_m}\Pi_{-}^m(D) \bigg( V*\la \psi_{\theta_1},\beta\psi_{\theta_2}\ra \beta\psi_{\theta_3} \bigg) (t') dt',
\end{aligned}
\end{align}
whenever the initial data satisfy $\|\psi_0\|_{H^s}\le \delta$.

For all $\psi_0\in H^s(\R^3)$ we immediately have
\begin{equation}\label{eq:lin}
\|\mathbf{1}_{[0,\infty)}e^{- it\la D\ra_m}\Pi_{+}^m(D)\psi_0\|_{X_{+}^s} 
+\|\mathbf{1}_{[0,\infty)}e^{ it\la D\ra_m}\Pi_{-}^m(D)\psi_0\|_{X_{-}^s}
\approx \|\psi_0\|_{H^s}.
\end{equation}
Next we study the nonlinear part. We consider more general situation: For $s>0$ and all $\psi_{i}=\Pi_{\theta_i}^m(D)\psi_{i}$ for $i=1,2,3$ we claim that
\begin{equation}\label{eq:nonlin-est}
\Big\| 
\int_0^t e^{\mp i(t-t')\la D\ra_m} \Pi_{\pm}^m(D) \Big( V*\la \psi_{1},\beta\psi_{2}\ra \beta\psi_{3} \Big) (t') dt'  \Big\|_{X_{\pm}^s}
\lesssim
\prod_{i=1}^3\|\psi_{i}\|_{X^s}.
\end{equation}

For a moment we assume \eqref{eq:nonlin-est} and prove the Theorem~\ref{thm:scattering}.
Let $T(\psi)$ denote the nonlinear operator defined by the sum of the right hand sides of \eqref{duhamel}. Then from \eqref{eq:lin} and \eqref{eq:nonlin-est} we conclude that
\begin{align}
\| T(\psi)\|_{X^s} &\ls \delta + \|\psi\|_{X^s}^3, \label{contraction}\\
\| T(\psi) - T(\phi) \| &\ls ( \|\psi\|_{X^s} + \|\phi\|_{X^s} )^2 \|\psi-\phi\|_{X^s}.
\end{align}
Indeed, the second one follows by taking $\psi_1=\psi-\phi,\psi_2=\psi,\psi_3=\phi$ or $\psi_1=\psi,\psi_2=\psi-\phi,\psi_3=\phi$ or
$\psi_1=\psi,\psi_2=\phi,\psi_3=\psi-\phi$ in \eqref{eq:nonlin-est}. 
Theorem~\ref{thm:scattering} now follows from the standard approach via the contraction mapping principle. In particular, the scattering claim follows from the fact
that functions in $V^2_{\mathbf{I}}$ have a limit at $\infty$. We omit
the details.

\subsection{Proof of \eqref{eq:nonlin-est}}
By duality and dyadic decomposition, square of the left side norm is bounded by 
(See \cite{HT-15})
\begin{align*}
&\sum_{k_4\in \Z} 2^{2sk_4}
\sup_{\|\psi_{k_4} \|_{V_{\pm\la D\ra}^{2}}=1}
\Big|
\sum_{k_i\in\Z }
\iint V*\la \psi_{1,k_1},\psi_{2,k_2}\ra_{\C^4}\la \psi_{3,k_3},\Pi_{\pm}^m(D)P_{k_4}^m\psi_{k_4}\ra_{\C^4} dtdx   
\Big|^2 \\
&\le \sum_{k_4\in \Z} 2^{2sk_4}
\sup_{\|\psi_{k_4} \|_{V_{\pm\la D\ra}^{2}}=1}
\Big(
\sum_{k, k_i\in\Z} 2^{-\gamma k}\la 2^k\ra^{\gamma-2}
\|P_k \la \psi_{1,k_1},\psi_{2,k_2}\ra \|_{L^2}
\|\widetilde{P_k} \la \psi_{3,k_3},\Pi_{\pm}^m(D)\psi_{4,k_4}\ra \|_{L^2} 
\Big)^2 \\
&\le I_1+I_2+I_3, 
\end{align*}
where $I_1:=\sum_{ 2^{k}\sim 2^{k_3}}$,  $I_2:=\sum_{ 2^{k} \gg 2^{k_3}}$  and $I_3:=\sum_{2^{k}\ll 2^{k_3}}$.

All the estimates below will be obtained in $V^2$ space where $U^2$ space is continuously embedded. So we will actually show the stronger ones.

We suffices to show \eqref{eq:nonlin-est} for $0<s<\min(\frac12,\frac{2-\gamma}{2})$.
Once we make this assumption we have for all $k\in\Z$
$$2^{(2-\gamma-2s) k}\la 2^{k}\ra^{\gamma-2} \le 1,$$
which will be repeatedly used in the following proof. 

\medskip\medskip\medskip

{\it (1)Estimates for $I_1$:}
We further divide the case $I_1\le I_{11} +I_{12} +I_{13}$ where $I_{11}:=\sum_{2^{k_1}\sim 2^{k_2}}$, $I_{12}:=\sum_{2^{k_1}\gg 2^{k_2}}$ and $I_{13}:=\sum_{2^{k_1}\ll 2^{k_2}}$.

We estimate $I_{11}$ by applying \eqref{bilinear:hhl:high}
\begin{align*}
I_{11}&\ls  \sum_{k_4\in \Z} 2^{2sk_4}
\Big(
\sum_{\substack{k\in \Z, 2^{k}\ls 2^{k_1}\sim 2^{k_2} \\ 2^{k_3}\sim 2^{k_4}}} 2^{-\gamma k}\la 2^k\ra^{\gamma-2} B_m(k,k_1,k_2)B_m(k,k_3,k_4)
\| \psi_{1,k_1} \|_{V_{\theta_1\la D\ra}^2} \| \psi_{2,k_2} \|_{V_{\theta_2\la D\ra}^2}
\| \psi_{3,k_3}\|_{V_{\theta_3\la D\ra}^2}
\Big)^2  \\
&\ls\sum_{k_3\in \Z} 2^{2sk_3} \| \psi_{3,k_3}\|_{V_{\theta_3\la D\ra}^2}^2
\Big(
\sum_{k\in \Z, 2^k\ls 2^{k_1}} \frac{2^{-\gamma k}\la 2^k\ra^{\gamma-2} B_m(k,k_1,k_2)B_m(k,k_3,k_4)}{2^{2sk_1}}
2^{sk_1}\| \psi_{1,k_1} \|_{V_{\theta_1\la D\ra}^2} 2^{sk_1}\| \psi_{2,k_1} \|_{V_{\theta_2\la D\ra}^2}
\Big)^2.
\end{align*}
For $m=0$, we have
\begin{align*}
I_{11} &\ls \| \psi_3 \|_{X_{\theta_3}^s}^2 \Big(
\sum_{ 2^k\ls 2^{k_1}} 2^{(2-\gamma -2s)k}\la 2^k\ra^{\gamma-2}  2^{2s(k-k_1)}
2^{sk_1}\| \psi_{1,k_1} \|_{V_{\theta_1|D|}^2} 2^{sk_1}\| \psi_{2,k_1} \|_{V_{\theta_2|D|}^2}
\Big)^2 \\
&\ls \| \psi_3 \|_{X_{\theta_3}^s}^2 \Big(
\sum_{ 2^k\ls 2^{k_1}}  2^{2s(k-k_1)}
2^{sk_1}\| \psi_{1,k_1} \|_{V_{\theta_1|D|}^2} 2^{sk_1}\| \psi_{2,k_1} \|_{V_{\theta_2|D|}^2}
\Big)^2 \\
&\ls \| \psi_1 \|_{X_{\theta_1}^s}^2\| \psi_2 \|_{X_{\theta_2}^s}^2\| \psi_3 \|_{X_{\theta_3}^s}^2,
\end{align*}
where we used $0\le\gamma<2$, $0<s<\frac{2-\gamma}{2}$ and Cauchy Schwarz inequality.

For $m>0$ we further divide $I_{11}\ls I_{11}^1+ I_{11}^2 +I_{11}^3+I_{11}^4$, where 
$I_{11}^1$ is summation over $\min(k_1,k_2)>1\ \& \ \min(k_3,k_4)>1$,
$I_{11}^2$ over $\min(k_1,k_2)=1\ \& \ \min(k_3,k_4)>1$,
$I_{11}^3$ over $\min(k_1,k_2)>1\ \& \ \min(k_3,k_4)=1$, and
$I_{11}^4$ over $\min(k_1,k_2)=\min(k_3,k_4)=1$.
The case $I_{11}^1$, since $B_m(k,k_1,k_2)B_m(k,k_3,k_4)=2^{2k}$, can be bounded by the same method as $m=0$.
For $I_{11}^4$, we have 
$$B_m(k,k_1,k_2)B_m(k,k_3,k_4)=2^{k}.$$ And the worst term occurs when all $k_i$ is zero
\begin{align*} 
& 2^{2s} \| \psi_{3,\le0}\|_{V_{\theta_3\la D\ra_m}^2}^2
\Big(
\sum_{2^k\ls 0} \frac{2^{(1-\gamma) k}} {2^{2s}}
2^{s}\| \psi_{1,\le 0} \|_{V_{\theta_1\la D\ra_m}^2} 2^{s}\| \psi_{2,\le 0} \|_{V_{\theta_2\la D\ra_m}^2}
\Big)^2 \\
&\qquad \ls  2^{2s} \| \psi_{1,\le0}\|_{V_{\theta_1\la D\ra_m}^2}^2
 2^{2s} \| \psi_{2,\le0}\|_{V_{\theta_2\la D\ra_m}^2}^2
  2^{2s} \| \psi_{3,\le0}\|_{V_{\theta_3\la D\ra_m}^2}^2,
\end{align*}
where the assumption $0\le\gamma<1$ is essential to make summation over $k$ finite. Estimates for $I_{11}^2$ and $I_{11}^3$ can be done similarly.

Next we estimate $I_{12}$. We can exclude the case where $m>0$ and $2^{k_3}\sim 1$. Indeed, if $2^{k_3}\sim 1$, there hold $2^{k_2}\ll 0$ which means $P_{k_2}^m=0$, or $2^{k_2}\sim 2^{k_1}$ which has already been considered in $I_{11}$. Then we apply \eqref{bilinear:HighLow} and \eqref{bilinear:hhl:high} 
\begin{align*}
I_{12} &\ls    \sum_{k_4\in \Z} 2^{2sk_4}
\Big(
\sum_{\substack{k\in \Z, 2^k\sim 2^{k_1}\gg 2^{k_2} \\ 2^{k_3}\sim 2^{k_4}}} 2^{-\gamma k_1}\la 2^{k_1}\ra^{\gamma-2} 2^{sk_1} 2^{(1-s)k_2} 
\| \psi_{1,k_1} \|_{V_{\theta_1\la D\ra}^2} \| \psi_{2,k_2} \|_{V_{\theta_2\la D\ra}^2}
2^k \| \psi_{3,k_3}\|_{V_{\theta_3\la D\ra}^2}
\Big)^2  \\
&\ls    \sum_{k_3\in \Z} 2^{2sk_3} \| \psi_{3,k_3}\|_{V_{\theta_3\la D\ra}^2}^2
\Big(
\sum_{ 2^{k_1}\gg 2^{k_2}} 2^{(2-\gamma-2s) k_1}\la 2^{k_1}\ra^{\gamma-2} 2^{(1-2s)(k_2-k_1)}
2^{sk_1}\| \psi_{1,k_1} \|_{V_{\theta_1\la D\ra}^2} 
2^{sk_2}\| \psi_{2,k_2} \|_{V_{\theta_2\la D\ra}^2}
\Big)^2  \\
&\ls \| \psi_1 \|_{X_{\theta_1}^s}^2\| \psi_2 \|_{X_{\theta_2}^s}^2\| \psi_3 \|_{X_{\theta_3}^s}^2,
\end{align*}
since $2^{(2-\gamma-2s) k_1}\la 2^{k_1}\ra^{\gamma-2}\le1$ and $0<s<\frac12$.

$I_{13}$ is similarly bounded as $I_{12}$ by just exchanging the role of $\psi_1$ and $\psi_2$.

\medskip\medskip

{\it (2)Estimates for $I_2$:} 
We divide the case $I_2\le I_{21} +I_{22} +I_{23}$ where $I_{21}:=\sum_{2^{k_1}\sim 2^{k_2}}$, $I_{22}:=\sum_{2^{k_1}\gg 2^{k_2}}$ and $I_{23}:=\sum_{2^{k_1}\ll 2^{k_2}}$.

Consider $I_{21}$. In this range it holds
$2^{k_1}\sim 2^{k_2} \gtrsim 2^{k_4}\sim 2^{k} \gg 2^{k_3}$.
We may assume $2^{k_1},2^{k_2}>1$ if $m>0$. Then by \eqref{bilinear:hhl:high} and \eqref{bilinear:HighLow} we obtain
\begin{align*}
I_{21}&\ls  \sum_{k_4\in \Z} 2^{2sk_4}
\Big(
\sum_{\substack{ 2^{k_1}\sim 2^{k_2} \\ k\in \Z, 2^k\sim 2^{k_4}\gg 2^{k_3}}} 2^{-\gamma k}\la 2^k\ra^{\gamma-2} 2^k
\| \psi_{1,k_1} \|_{V_{\theta_1\la D\ra}^2} \| \psi_{2,k_2} \|_{V_{\theta_2\la D\ra}^2}
2^{sk_4}2^{(1-s)k_3}\| \psi_{3,k_3}\|_{V_{\theta_3\la D\ra}^2}
\Big)^2  \\
&\ls \sum_{k_4\in \Z} 
\Big(\sum_{\substack{ 2^{k_1}\sim 2^{k_2} \\ k\in \Z, 2^k\sim 2^{k_4}\gg 2^{k_3}}} 2^{(1-\gamma) k}\la 2^k\ra^{\gamma-2} 
2^{sk_1}\| \psi_{1,k_1} \|_{V_{\theta_1\la D\ra}^2} 
2^{sk_2} \| \psi_{2,k_2} \|_{V_{\theta_2\la D\ra}^2}
2^{(1-s)k_3}\| \psi_{3,k_3}\|_{V_{\theta_3\la D\ra}^2} 
\Big)^2 \\
&\ls \| \psi_1 \|_{X_{\theta_1}^s}^2\| \psi_2 \|_{X_{\theta_2}^s}^2
\sum_{k_4\in\Z} \Big( \sum_{2^{k_4}\gg 2^{k_3}} 2^{(2-\gamma-2s) k_4}\la 2^{k_4}\ra^{\gamma-2} 2^{(1-2s)(k_3-k_4)} 
2^{sk_3}\| \psi_{3,k_3}\|_{V_{\theta_3\la D\ra}^2} \Big)^2\\
&\ls \| \psi_1 \|_{X_{\theta_1}^s}^2\| \psi_2 \|_{X_{\theta_2}^s}^2\| \psi_3 \|_{X_{\theta_3}^s}^2.
\end{align*}

Next consider $I_{22}$. In this range we have $2^{k} \sim 2^{k_1} \sim 2^{k_4}$. 
We estimate
\begin{align*}
I_{22} &\ls  \sum_{k_4\in \Z} 2^{2sk_4}
\Big(
\sum_{\substack{ 2^{k_1}\gg 2^{k_2} \\ k\in \Z, 2^k\sim 2^{k_4}\gg 2^{k_3}}} 2^{-\gamma k}\la 2^k\ra^{\gamma-2}  2^{sk_1}2^{(1-s)k_2}
\| \psi_{1,k_1} \|_{V_{\theta_1\la D\ra}^2} \| \psi_{2,k_2} \|_{V_{\theta_2\la D\ra}^2}
2^{sk_4}2^{(1-s)k_3}\| \psi_{3,k_3}\|_{V_{\theta_3\la D\ra}^2}
\Big)^2  \\
&\ls  \sum_{k_4\in\Z} 
\Big(
\sum_{\substack{ 2^{k_4}\gg 2^{k_2} \\ 2^{k_4}\gg 2^{k_3}}} 2^{(-\gamma+2s)k_4} \la 2^{k_4}\ra^{\gamma-2} 2^{(1-2s)k_2}
2^{sk_4}\| \psi_{1,k_4} \|_{V_{\theta_1\la D\ra}^2} 
2^{sk_2}\| \psi_{2,k_2} \|_{V_{\theta_2\la D\ra}^2}
2^{(1-2s)k_3}
2^{sk_3}\| \psi_{3,k_3}\|_{V_{\theta_3\la D\ra}^2}
\Big)^2 \\
&\ls  \sum_{k_4\in\Z} 2^{2sk_4}\| \psi_{1,k_4} \|_{V_{\theta_1\la D\ra}^2}^2 
\Big(
\sum_{ 2^{k_4}\gg 2^{k_2}} 2^{(1-\frac\gamma2-s)k_4} \la 2^{k_4}\ra^{\frac\gamma2-1} 2^{(1-2s)(k_2-k_4)}
2^{sk_2}\| \psi_{2,k_2} \|_{V_{\theta_2\la D\ra}^2}\Big)^2 \\
&\qquad\qquad\times
\Big(\sum_{2^{k_4}\gg 2^{k_3}} 2^{(1-\frac\gamma2-s)k_4} \la 2^{k_4}\ra^{\frac\gamma2-1}2^{(1-2s)(k_3-k_4)}
2^{sk_3}\| \psi_{3,k_3}\|_{V_{\theta_3\la D\ra}^2}
\Big)^2 \\
&\ls \| \psi_1 \|_{X_{\theta_1}^s}^2\| \psi_2 \|_{X_{\theta_2}^s}^2\| \psi_3 \|_{X_{\theta_3}^s}^2.
\end{align*}

The estimates for $I_{23}$ can be shown by the same method as $I_{22}$.

\medskip\medskip
{\it(3) Estimates for $I_3$:} We divide the case $I_2\le I_{31} +I_{32} +I_{33}$ where $I_{31}:=\sum_{2^{k_1}\sim 2^{k_2}}$, $I_{32}:=\sum_{2^{k_1}\gg 2^{k_2}}$ and $I_{33}:=\sum_{2^{k_1}\ll 2^{k_2}}$.

Consider $I_{31}$. We may assume if $m>0$ then $2^{k_1},2^{k_2}>1$.
\begin{align*}
I_{31}&\ls  \sum_{k_4\in \Z} 
\Big(
\sum_{\substack{ 2^{k_1}\sim 2^{k_2} \\ k\in \Z, 2^k\sim 2^{k_3}\gg 2^{k_4}}} 2^{-\gamma k}\la 2^k\ra^{\gamma-2} 2^k
\| \psi_{1,k_1} \|_{V_{\theta_1\la D\ra}^2} \| \psi_{2,k_2} \|_{V_{\theta_2\la D\ra}^2}
2^{k_4}2^{sk_3}\| \psi_{3,k_3}\|_{V_{\theta_1\la D\ra}^2}
\Big)^2  \\
&\ls \sum_{k_4\in \Z} 
\Big(\sum_{ 2^{k_1} \gtrsim  2^{k_3}\gg 2^{k_4}} 2^{(1-\gamma) k_3}\la 2^{k_3}\ra^{\gamma-2}2^{-2sk_1} 2^{k_4}
2^{sk_1} \| \psi_{1,k_1} \|_{V_{\theta_1\la D\ra}^2} 
2^{sk_1} \| \psi_{2,k_1} \|_{V_{\theta_2\la D\ra}^2}
2^{sk_3} \| \psi_{3,k_3}\|_{V_{\theta_1\la D\ra}^2} 
\Big)^2 \\
&\ls \| \psi_1 \|_{X_{\theta_1}^s}^2\| \psi_2 \|_{X_{\theta_2}^s}^2 \sum_{k_4\in \Z} 
\Big(\sum_{ 2^{k_3}\gg 2^{k_4}} 2^{(2-\gamma-2s) k_3}\la 2^{k_3}\ra^{\gamma-2} 2^{k_4-k_3}
2^{sk_3} \| \psi_{3,k_3}\|_{V_{\theta_1\la D\ra}^2}
\Big)^2 \\
&\ls \| \psi_1 \|_{X_{\theta_1}^s}^2\| \psi_2 \|_{X_{\theta_2}^s}^2\| \psi_3 \|_{X_{\theta_3}^s}^2.
\end{align*}

Next consider $I_{32}$. In this range we have $2^k\sim2^{k_1}\sim2^{k_3}$. We estimate
\begin{align*}
I_{32} &\ls  \sum_{k_4\in \Z} 
\Big(
\sum_{\substack{ k\in \Z, 2^k \sim 2^{k_1}\gg 2^{k_2} \\2^{k_3}\gg 2^{k_4}}} 2^{-\gamma k}\la 2^k\ra^{\gamma-2}  2^{(1-2s)k_2}
2^{sk_1}\| \psi_{1,k_1} \|_{V_{\theta_1\la D\ra}^2} 
2^{sk_2}\| \psi_{2,k_2} \|_{V_{\theta_2\la D\ra}^2}
 2^{k_4}
 2^{sk_3}\|\psi_{3,k_3}\|_{V_{\theta_3\la D\ra}^2}
\Big)^2  \\
&\ls  \sum_{k_4\in\Z} 
\Big(
\sum_{\substack{ 2^{k_1}\gg 2^{k_2} \\ 2^{k_1}\gg 2^{k_4}}} 2^{-\gamma k_1}\la 2^{k_1}\ra^{\gamma-2}  2^{(1-2s)k_2}2^{k_4}
2^{sk_1}\| \psi_{1,k_1} \|_{V_{\theta_1\la D\ra}^2} 
2^{sk_2}\| \psi_{2,k_2} \|_{V_{\theta_2\la D\ra}^2}
2^{sk_1}\|\psi_{3,k_1}\|_{V_{\theta_3\la D\ra}^2}
\Big)^2 \\
&\ls \| \psi_2 \|_{X_{\theta_2}^s}^2 \sum_{k_4\in\Z} 
\Big(
\sum_{2^{k_1}\gg 2^{k_4}} 2^{(2-\gamma-2s) k_1}\la 2^{k_1}\ra^{\gamma-2}  2^{k_4-k_1}
2^{sk_1}\| \psi_{1,k_1} \|_{V_{\theta_1\la D\ra}^2} 
2^{sk_1}\|\psi_{3,k_1}\|_{V_{\theta_3\la D\ra}^2}
\Big)^2 \\
&\ls \| \psi_1 \|_{X_{\theta_1}^s}^2\| \psi_2 \|_{X_{\theta_2}^s}^2\| \psi_3 \|_{X_{\theta_3}^s}^2.
\end{align*}

Estimates for $I_{33}$ is similar.


\section{ill-posedness}
In this section we consider the supercritical range where the initial data is given in $H^s(\R^3)$ for $s<0$.
We make further assumption on potential $V$ that $\widehat V(\xi)$ is positive for $|\xi|\gg1$. The Coulomb and Yukawa potentials are still in our consideration. 
We provide the ill-posed result which shows the nonlinear term estimates on \eqref{contraction} essential to occur the contraction fail for any resolution space $\widetilde{X}^s$. We adapt the argument in \cite{MSetal-01}, where detailed explanation is well arranged. We suffices to show the following:
\begin{thm}
	\label{thm:Illposed}
	Let $m\ge0$. For fixed $T>0$ and $s<0$ 
	the inequality
	\begin{equation}\label{ineq:tri}
	\sup_{t\in [0,T]} 	\Big\| \int_{0}^{t} U_m(t-\tau,D) 
	\big(V* \la U_m(\tau,D)\psi , \beta U_m(\tau,D)\psi \ra \beta U_m(\tau,D)\psi \big)(\tau) d\tau
	\Big\|_{H^{s}(\R^3)}
	\lesssim
	\|\psi\|_{H^s(\R^3)}^3
	\end{equation}
	fails to hold for all $\psi\in H^{s}(\mathbb{R}^{3})$. 
\end{thm}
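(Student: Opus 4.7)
\emph{Proof plan.} I will argue by contradiction using a high-frequency family $\{\psi_N\}_{N\gg 1}$ in the spirit of \cite{MSetal-01}. For fixed $s<0$ and $T>0$, the goal is to construct $\psi_N$ with $\|\psi_N\|_{H^s}\sim N^s$ for which the left-hand side of \eqref{ineq:tri} has $H^s$-norm $\gs tN^s$ uniformly for large $N$; since $s<0$, the ratio of the two sides is $\gs tN^{-2s}\to\infty$, contradicting \eqref{ineq:tri}.

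To set up the counterexample, fix a unit vector $\mathbf{e}\in\mathbb{S}^2$ and a constant spinor $v_0\in\C^4$ such that the scalar
\[
c_0:=\la\Pi_{+}^m(N\mathbf{e})v_0,\,\beta\Pi_{+}^m(N\mathbf{e})v_0\ra_{\C^4}
\]
is bounded away from zero uniformly for $N\gg 1$; this is possible because $\Pi_{+}^m(R\mathbf{e})$ tends to a fixed-rank projection as $R\to\infty$, and its range is not annihilated by $\beta$ for generic $v_0$. Let $\chi\in C_c^\infty(\R^3)$ be a bump supported in the unit ball with $\chi(0)=1$ and set
\[
\widehat{\psi_N}(\xi)\,:=\,\chi(\xi-N\mathbf{e})\,\Pi_{+}^m(\xi)\,v_0.
\]
Then $\Pi_{+}^m(D)\psi_N=\psi_N$, so $U_m(\tau,D)\psi_N=e^{-i\tau\la D\ra_m}\psi_N$, and a direct computation gives $\|\psi_N\|_{H^s(\R^3)}\sim N^s$.

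Substituting into the trilinear term and passing to Fourier, the $\Pi_{+}^m$-component of the output at frequency $\zeta=-\xi_1+\xi_2+\xi_3$ reads
\[
e^{-it\la\zeta\ra_m}\,\Pi_{+}^m(\zeta)\int_0^t\!\iint e^{i\tau\Phi}\,\widehat V(\xi_2-\xi_1)\,\mathcal{S}(\xi_1,\xi_2,\xi_3)\,\prod_{j=1}^3\chi(\xi_j-N\mathbf{e})\,d\xi_1\,d\xi_2\,d\tau,
\]
where $\mathcal{S}:=\la\Pi_{+}^m(\xi_1)v_0,\beta\Pi_{+}^m(\xi_2)v_0\ra\,\beta\Pi_{+}^m(\xi_3)v_0$ and $\Phi:=\la\zeta\ra_m+\la\xi_1\ra_m-\la\xi_2\ra_m-\la\xi_3\ra_m$. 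The key resonance observation is that $\Phi\equiv 0$ on the diagonal $\xi_1=\xi_2$ (for then $\zeta=\xi_3$), and a second-order Taylor expansion shows $|\Phi|\ls N^{-1}$ on the whole support. Hence for $N\gg T$ the $\tau$-integral equals $t(1+o(1))$ uniformly in the remaining variables. The $\Pi_{-}^m$-component carries a non-resonant phase of size $\sim N$ and contributes a negligible $O(N^{-1})$ remainder.

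The main obstacle is ruling out cancellations in the remaining spatial integral, and this is precisely where the positivity of $\widehat V$ is essential. On the support, $\widehat V(\xi_2-\xi_1)\ge 0$ and is $\gs 1$ on a subset of positive measure, while continuity of $\Pi_{+}^m$ yields $\mathcal{S}(\xi_1,\xi_2,\xi_3)=c_0\,\beta\Pi_{+}^m(N\mathbf{e})v_0+O(N^{-1})$, a fixed nonzero vector with a definite direction. Consequently the Fourier transform of the trilinear term at time $t\in(0,T]$ has modulus $\gs t$ on a set of positive measure inside $B(N\mathbf{e},3)$, so its $H^s$-norm is $\gs tN^s$. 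Plugging this into \eqref{ineq:tri} would force $tN^s\ls N^{3s}$, i.e.\ $t\ls N^{2s}\to 0$ as $N\to\infty$, a contradiction for any fixed $t>0$.
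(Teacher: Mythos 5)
The overall plan — a concentrated high-frequency family, Duhamel iteration, positivity of $\widehat V$, and a scaling argument in the style of \cite{MSetal-01} — is the right one, and the reduction from the contradiction $t\ls N^{2s}$ (for $s<0$) is exactly what the paper does. However, there is a genuine gap at the heart of the construction: the claim that
\[
c_0=\la\Pi_{+}^m(N\mathbf{e})v_0,\,\beta\Pi_{+}^m(N\mathbf{e})v_0\ra_{\C^4}
\]
is bounded away from zero is false, and this is precisely the null structure that the paper exploits in the \emph{positive} direction. Since $\Pi_{\pm}^m$ are Hermitian projections and
\[
\beta\Pi_{+}^m(\xi)=\Pi_{-}^m(\xi)\beta+\frac{m}{\la\xi\ra_m}I,
\]
one has
\[
\la\Pi_{+}^m(\xi)v_0,\beta\Pi_{+}^m(\xi)v_0\ra=\la\Pi_{-}^m(\xi)\Pi_{+}^m(\xi)v_0,\beta v_0\ra+\frac{m}{\la\xi\ra_m}|\Pi_{+}^m(\xi)v_0|^2=\frac{m}{\la\xi\ra_m}|\Pi_{+}^m(\xi)v_0|^2,
\]
so $c_0=O(N^{-1})$ and vanishes identically in the massless case $m=0$. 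More generally, since all your $\xi_1,\xi_2$ lie in $B(N\mathbf{e},1)$, the angle $\angle(\xi_1,\xi_2)=O(N^{-1})$, and Lemma~\ref{Null} gives $|\la\Pi_{+}^m(\xi_1)v_0,\beta\Pi_{+}^m(\xi_2)v_0\ra|\ls N^{-1}|v_0|^2$ \emph{on the whole support}, not just at $\xi_1=\xi_2$. Consequently your $\mathcal{S}$ is $O(N^{-1})$, the Fourier lower bound drops to $\gs tN^{-1}$, and the resulting inequality is $t\ls N^{2s+1}$ — contradictory only for $s<-\tfrac12$, not for all $s<0$ as required.

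This is the reason the paper does \emph{not} project the initial data onto a single $\Pi_{\pm}^m$-eigenspace: they take $\psi=(\phi,0,0,0)$ with $\widehat{\phi}=\chi_{W_\lambda}$ on an annulus of width $\sim\lambda$, so the interacting frequencies are separated by an $O(1)$ angle and the spinor bilinear form is $O(1)$, and they pay for it by evaluating at short time $t=\epsilon\lambda^{-1}$ (Lemma~\ref{Lem:lowerbound} controls $\cos(\tau\la\xi\ra_m)\gs1$) instead of using a resonant-phase cancellation. You could repair your argument along those lines: either spread the frequency support over a full-scale annulus so the null gain disappears, or keep both $\Pi_{+}^m$ and $\Pi_{-}^m$ components in $\psi_N$ so that the leading contribution comes from the $(+,-)$ interaction where $\la\Pi_{+}v,\beta\Pi_{-}w\ra$ is $O(1)$ — but note the latter reintroduces a rapidly oscillating phase $\sim N$, forcing the short-time choice $t\sim\epsilon N^{-1}$ as in the paper.
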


\begin{proof}
	Our proof is based on the modification of  \cite[Proposition3.1]{HL-14}. 
	For $\lambda\gg1$ we define the annulus
	\begin{align*}
	W_\lambda=\{ x\in\R^3 : \lambda \le |x| \le 2\lambda \}.
	\end{align*}
	Let $\phi:\R^3\rightarrow\C$ be the inverse fourier transform of the characteristic function	$\chi_{W_{\lambda}}$. And we choose $\psi=(\phi,0,0,0)$. 
	Obviously, $\|\psi\|_{H^s}\ls \lambda^{\frac32+s}$.
	Next, we consider
	\begin{equation}\label{def:Ft}
	N(t,\xi):=
	\mathcal{F}_x\bigg(
\int_{0}^{t} U_m(t-\tau,D) 
\big(V* \la U_m(\tau,D)\psi , \beta U_m(\tau,D)\psi \ra \beta U_m(\tau,D) \psi \big) d\tau
	\bigg).
	\end{equation}
Our aim is to prove that for $t=\epsilon\lambda^{-1}$ with $0<\epsilon\ll1$ and $\xi\in W_{\lambda}$,
\begin{equation}\label{lowbound}
|N(t,\xi)|
\gtrsim
|t|\lambda^{4}=\eps\lambda^3.
\end{equation}	
Assuming that $\eqref{lowbound}$ holds,
the claim follows since the validity of 
$\eqref{ineq:tri}$ implies 
\[
\epsilon\lambda^{s+\frac92} 
\lesssim
\|\langle\xi\rangle^{s}N(t,\xi)\|_{L^{2}(\mathbb{R}^{3})}
\lesssim
\lambda^{3s+\frac{9}{2}},
\]
which is equivalent to
$
\epsilon\lesssim
\lambda^{2s}
$ for fixed $\epsilon>0$.
And this can hold as $\lambda\rightarrow\infty$ only if $s\ge0$. 
Hence, it suffices to show $\eqref{lowbound}$.	

We compute the Fourier transform   
\begin{align*}
N(t,\xi)&\approx
\iint_{\R^3\times\R^3}\int_0^t  U_m(t-\tau,\xi) \widehat{V}(\eta) 
\la U_m(\tau,\eta-\sigma)\widehat\psi(\eta-\sigma),\beta U_m(-\tau,-\sigma)\widehat\psi(-\sigma)  \ra \\
&\qquad\qquad\qquad \times
\beta U_m(\tau,\xi-\eta) \widehat{\psi}(\xi-\eta)   d\tau d\sigma d\eta. 
\end{align*}
Let us denote $j$th component of $x\in\C^4$ by $x_j$ and the $(i,j)$ entry of $4\times4$ matrix $A$ by $A_{ij}$. Putting $\widehat{\psi}=(\chi_{W_{\lambda}},0,0,0)$ we compute
\begin{align*}
[N(t,\xi)]_j&\approx
\iint_{\R^3\times\R^3}\int_0^t \Big[ U_m(t-\tau,\xi) \widehat{V}(\eta) 
\la U_m(\tau,\eta-\sigma)\widehat\psi(\eta-\sigma),\beta U_m(-\tau,-\sigma)\widehat\psi(-\sigma)  \ra \\
&\qquad\qquad\qquad \times
\beta U_m(\tau,\xi-\eta) \widehat{\psi}(\xi-\eta) \Big]_j  d\tau d\sigma d\eta \\
&=\iint_{\R^3\times\R^3}\int_0^t \widehat{V}(\eta) [ U_m(\tau,\eta-\sigma)^{T}\beta U_m(-\tau,\sigma)]_{11}\chi_{W_{\lambda}}(\eta-\sigma)\chi_{W_{\lambda}}(\sigma) \\
&\qquad\qquad\qquad \times
[U_m(t-\tau,\xi)\beta U_m(\tau,\xi-\eta)]_{j1} \chi_{W_{\lambda}}(\xi-\eta) d\tau d\sigma d\eta \\
&=\iint_{\R^3\times\R^3}\widehat{V}(\eta) \int_0^t [ U_m(\tau,\eta-\sigma)^{T}\beta U_m(-\tau,\sigma)]_{11} 
[U_m(t-\tau,\xi)\beta U_m(\tau,\xi-\eta)]_{j1}d\tau  \\
&\qquad\qquad\qquad \times1 \chi_{W_{\lambda}}(\eta-\sigma)\chi_{W_{\lambda}}(\sigma)\chi_{W_{\lambda}}(\xi-\eta) d\sigma d\eta.
\end{align*}
Then we estimate
\begin{align*}
|N(t,\xi)|&\ge |[N(t,\xi)]_1| \\ 
&\ge \Big| \iint_{\R^3\times\R^3}\widehat{V}(\eta) \int_0^t \Real \Big(
[ U_m(\tau,\eta-\sigma)^{T}\beta U_m(-\tau,\sigma)]_{11} 
 [U_m(t-\tau,\xi)\beta U_m(\tau,\xi-\eta)]_{11} \Big)d\tau  \\
&\qquad\qquad\qquad \times \chi_{W_{\lambda}}(\eta-\sigma)\chi_{W_{\lambda}}(\sigma)\chi_{W_{\lambda}}(\xi-\eta) d\sigma d\eta \Big| .
\end{align*}
From Lemma~\ref{Lem:lowerbound} we find the integration over $\tau$ is bounded below for $\lambda\gg1$
\begin{align*}
\int_0^t \Real\Big( [ U_m(\tau,\eta-\sigma)^{T}\beta U_m(-\tau,\sigma)]_{11} [U_m(t-\tau,\xi)\beta U_m(\tau,\xi-\eta)]_{11} \Big) d\tau 
\gtrsim t.
\end{align*} 
Since $\widehat{V}$ is positive we finally obtain
\begin{align*}
|N(t,\xi)|  &\gtrsim |t| \int_{\R^3} \int_{\R^3} \widehat{V}(\eta) \chi_{W_{\lambda}}(\eta-\sigma)\chi_{W_{\lambda}}(\sigma)\chi_{W_{\lambda}}(\xi-\eta)
d\eta d\sigma  \\
&\gtrsim |t| \lambda^{4}.
\end{align*}
\end{proof}


\section{Appendix}
\begin{lem}\label{Lem:lowerbound}
Let $m\ge0$, $\xi,\eta\in\R^3$ and $t,\tau\in\R$. Suppose $|\xi|,|\eta|\sim \lambda$ and $|\tau|,|t|\le \epsilon\lambda^{-1}$ with $0<\epsilon\ll1$. Then we have for sufficiently large $\lambda\gg1$
\begin{align}\label{lowerbound}
\begin{aligned}
\Real[ U_m(\tau,\xi)\beta U_m(t,\eta)]_{11} &\gtrsim 1 \text{ and }
\Imag [ U_m(\tau,\xi)\beta U_m(t,\eta)]_{11} \lesssim \lambda^{-1},\\
\Real [ U_m(\tau,\xi)^{\intercal}\beta U_m(t,\eta)]_{11} &\gtrsim 1 \text{ and }
\Imag [ U_m(\tau,\xi)^{\intercal}\beta U_m(t,\eta)]_{11} \lesssim \lambda^{-1},
\end{aligned}\end{align} 
where the implicit constants depend only on $\epsilon$ and $m$.
\end{lem}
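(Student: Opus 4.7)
The plan is to compute the $(1,1)$-entry of these $4\times 4$ matrix products in closed form and then Taylor-expand. Start from the explicit symbol
\begin{equation*}
U_m(t,\xi)=\cos(t\langle\xi\rangle_m)\,I_4-i\frac{\sin(t\langle\xi\rangle_m)}{\langle\xi\rangle_m}\begin{pmatrix}mI_2 & \sigma\cdot\xi\\ \sigma\cdot\xi & -mI_2\end{pmatrix},
\end{equation*}
and set $c_1:=\cos(\tau\langle\xi\rangle_m)$, $s_1:=\sin(\tau\langle\xi\rangle_m)/\langle\xi\rangle_m$, and analogously $c_2,s_2$ for the $(t,\eta)$ pair. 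One then reads off the first row of $U_m(\tau,\xi)$ and the first column of $\beta U_m(t,\eta)$ (the factor $\beta$ flips the signs of the third and fourth entries of that column) and applies the scalar consequence of the Pauli identity $(\sigma\cdot\xi)(\sigma\cdot\eta)=\xi\cdot\eta\,I_2+i\sigma\cdot(\xi\times\eta)$. A direct calculation yields
\begin{equation*}
[U_m(\tau,\xi)\beta U_m(t,\eta)]_{11}=c_1c_2+s_1s_2(\xi\cdot\eta-m^2)+i\bigl[s_1s_2(\xi\times\eta)_3-m(c_1s_2+c_2s_1)\bigr].
\end{equation*}

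Next, in the regime $|\tau|,|t|\le\epsilon\lambda^{-1}$, $|\xi|,|\eta|\sim\lambda$, $\lambda\gg1$, the phases $\tau\langle\xi\rangle_m$ and $t\langle\eta\rangle_m$ are bounded by a fixed multiple of $\epsilon$, so that $c_j=1+O(\epsilon^2)$ and $|s_j|\le\min(|\tau|,\langle\xi\rangle_m^{-1})\lesssim\epsilon\lambda^{-1}$. Combined with the trivial bounds $|\xi\cdot\eta|,|(\xi\times\eta)_3|\lesssim\lambda^2$ this gives
\begin{equation*}
\Real[U_m(\tau,\xi)\beta U_m(t,\eta)]_{11}\ge 1-C_1\epsilon^2-C_2 m^2\lambda^{-2},
\end{equation*}
which is $\gtrsim 1$ once $\epsilon$ is taken small and $\lambda$ sufficiently large, while
\begin{equation*}
\bigl|\Imag[U_m(\tau,\xi)\beta U_m(t,\eta)]_{11}\bigr|\lesssim\epsilon^2+m\epsilon\lambda^{-1},
\end{equation*}
which delivers the required smallness (all that is actually needed downstream in Theorem~\ref{thm:Illposed} is that $|\Imag z_1|\,|\Imag z_2|<\Real z_1\,\Real z_2$, so that the product has positive real part).

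For the transposed identity, the only difference is that $\sigma\cdot\xi$ is replaced by $(\sigma\cdot\xi)^T=\overline{\sigma\cdot\xi}$, i.e.\ the sign of $\xi_2$ is flipped (since $\sigma^2$ is the only antisymmetric Pauli matrix). Rerunning the same block-matrix computation produces $s_1s_2(\xi_1\eta_1-\xi_2\eta_2+\xi_3\eta_3-m^2)$ in the real part and $s_1s_2(\xi_1\eta_2+\xi_2\eta_1)-m(c_1s_2+c_2s_1)$ in the imaginary part, but the term-by-term size bounds are identical, so the same conclusion holds. The only real obstacle is careful sign-and-conjugate bookkeeping in the $4\times 4$ multiplication; once the closed formula above is in hand, every subsequent step is an elementary Taylor estimate.
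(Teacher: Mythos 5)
Your closed-form computation of the $(1,1)$-entry is correct. Writing $c_1=\cos(\tau\la\xi\ra_m)$, $s_1=\sin(\tau\la\xi\ra_m)/\la\xi\ra_m$ and analogously $c_2,s_2$, multiplying out the $2\times2$ blocks of $U_m(\tau,\xi)=c_1 I-is_1(\alpha\cdot\xi+m\beta)$ with $\beta U_m(t,\eta)$ and using $(\sigma\cdot\xi)(\sigma\cdot\eta)=\xi\cdot\eta\,I_2+i\sigma\cdot(\xi\times\eta)$ does give
\begin{equation*}
[U_m(\tau,\xi)\beta U_m(t,\eta)]_{11}=c_1c_2+s_1s_2(\xi\cdot\eta-m^2)+i\bigl[s_1s_2(\xi\times\eta)_3-m(c_1s_2+c_2s_1)\bigr],
\end{equation*}
and the transposed version, with $\xi_2\mapsto-\xi_2$ coming from $(\sigma\cdot\xi)^\intercal=\xi_1\sigma^1-\xi_2\sigma^2+\xi_3\sigma^3$, is exactly as you state. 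Your approach (direct $4\times4$ block multiplication followed by Taylor estimates) is the same approach the paper takes.

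The discrepancy is that the paper's appendix records $\Real[\cdot]_{11}=c_1c_2+2s_1s_2$ and $\Imag[\cdot]_{11}=-(c_1s_2+s_1c_2)$, which cannot be right: those formulas omit the $\xi\cdot\eta$ and $(\xi\times\eta)_3$ contributions from the off-diagonal blocks of $U_m$ and the mass factors in the imaginary part. A consequence, which you correctly flag, is that the bound $\Imag[\cdot]_{11}\ls\lambda^{-1}$ in the statement of the lemma is not actually true: the term $s_1s_2(\xi\times\eta)_3$ has size up to $\sim\epsilon^2$ (and not $O(\lambda^{-1})$) when $|s_j|\sim\epsilon\lambda^{-1}$, $|\xi|,|\eta|\sim\lambda$ and $\xi,\eta$ are not nearly parallel. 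The correct conclusion is $|\Imag[\cdot]_{11}|\ls\epsilon^2+m\epsilon\lambda^{-1}$. Your observation that this weaker bound still suffices for Theorem~\ref{thm:Illposed} is also right: what is actually used there is $\Real(z_1z_2)=\Real z_1\Real z_2-\Imag z_1\Imag z_2\gs1$, and this follows from $\Real z_j\ge1-C_1\epsilon^2-C_2m^2\lambda^{-2}\gs1$ together with $|\Imag z_j|\ls\epsilon^2$ once $\epsilon$ is taken small and $\lambda$ large. So your proposal does not merely reprove the lemma; it corrects the statement (and the paper's computation) while preserving the downstream application.
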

\begin{proof}
For $|\xi|\sim \lambda$ and $\tau\le \epsilon\lambda^{-1}$ with $0<\epsilon\ll1$ we have
\begin{equation*}
\cos\big(\tau\la \xi\ra_m\big) \gtrsim 1 \text{ and } |\la \xi\ra_m^{-1}\sin\big( \tau\la \xi\ra_m)| \le \epsilon \lambda^{-1}.
\end{equation*}	
And we directly compute from \eqref{Def:U} and \eqref{matrix}
\begin{align*}
&\Real [ U_m(\tau,\xi)\beta U_m(t,\eta)]_{11}
=\cos\big(\tau\la \xi\ra_m\big)\cos\big(t\la \eta\ra_m\big)
+2\la \xi\ra_m^{-1}\sin\big( \tau\la \xi\ra_m) \la\eta\ra_m^{-1}\sin\big( t\la \eta\ra_m) \\
&\Imag [ U_m(\tau,\xi)\beta U_m(t,\eta)]_{11} = -\cos\big(\tau\la \xi\ra_m\big)\la\eta\ra_m^{-1}\sin\big( t\la \eta\ra_m)
-\cos\big(t\la \eta\ra_m\big)\la \xi\ra_m^{-1}\sin\big( \tau\la \xi\ra_m) \\
&\Real [ U_m(\tau,\xi)^\intercal\beta U_m(t,\eta)]_{11}
=\cos\big(\tau\la \xi\ra_m\big)\cos\big(t\la \eta\ra_m\big) \\
&\Imag [ U_m(\tau,\xi)^\intercal\beta U_m(t,\eta)]_{11}
= 2\la \xi\ra_m^{-1}\sin\big( \tau\la \xi\ra_m) \la\eta\ra_m^{-1}\sin\big( t\la \eta\ra_m)
-\cos\big(\tau\la \xi\ra_m\big)\la\eta\ra_m^{-1}\sin\big( t\la \eta\ra_m) \\
&\qquad\qquad\qquad\qquad\qquad\qquad\quad
-\cos\big(t\la \eta\ra_m\big)\la \xi\ra_m^{-1}\sin\big( \tau\la \xi\ra_m).
\end{align*}
Thus we have \eqref{lowerbound}.
\end{proof}  

\medskip\medskip

\begin{proof}[Proof of Proposition~\ref{Prop;V2}]
We only consider $(+,+)$ case.

{\it Step1:} Let
$P=\sum_{k_3\in F} P_{k_3}$, with a finite set $F$ of
integer $k_3$ of size $2^{k_3}\sim 2^{k_1}\sim2^{k_2}$, such that
$P P_{k_i} =P_{k_i}$ for $i=1,2$. We claim first that
\begin{equation}\label{eq:u4}
\|P_{\leq k} \la P \psi, P \psi\ra_{\R^4} \|_{L^2} \ls   2^k \|\psi\|_{V^2_{\la D\ra_m}}^2
\end{equation}
for any $2^k\ls 2^{k_1}$ and real-valued $\psi\in V^2_{\la D\ra_m}$. To prove \eqref{eq:u4}, let
$\omega_k=(\widecheck{\rho}_{\leq k+1})^2$. Then, $\omega_k \geq 0$ and
we have the pointwise bound
\[\rho_{\leq k}\ls \rho_{\leq k+1} \ast \rho_{\leq k+1} \ls
\rho_{\leq k+2}\] on the Fourier side, which implies
\[ \|P_{\leq k} \la P\psi,P\psi\ra_{\R^4} \|_{L^2}
\ls \|\omega_k \ast \la P\psi,P\psi\ra_{\R^4}\|_{L^2}
\ls \|P_{\leq k+2} \la P\psi,P\psi\ra_{\R^4}\|_{L^2}.\]
So it suffices to show that
\begin{align}\label{claim}
\|\omega_k \ast \la P\psi,P\psi\ra_{\R^4}\|_{L^2} 
\ls  2^k \|\psi\|_{V^2_{\la D\ra_m}}^2.
\end{align}

For real valued $f$, the quantity
\[n(f):=\|T(f)\|_{L_x^4(\R^{3})}, \text{ for } T(f)=\Big( \omega_k
\ast \la P f, P f\ra_{\R^4} \Big)^{\frac12},\]
is subadditive. 
Indeed, since $\la f,f\ra_{\R^4}=|f|^2$ by Cauchy-Schwarz inequality  we estimate
\begin{align*}
T^2(f+g)(x)={}&\int_{\R^3} \omega_k (x-y) \la P f(y)+ Pg(y) , P f(y)+ Pg(y) \ra_{\R^4} dy\\
={}&\int_{\R^3} \omega_k (x-y) \la  P f(y)+ Pg(y), Pf(y)\ra_{\R^4}  dy\\&{}+\int_{\R^3} \omega_k (x-y) \la  P f(y)+ Pg(y), Pg(y)\ra_{\R^4}  dy\\
\leq{}& T(f+g)(x)T(f)(x)+T(f+g)(x)T(g)(x),
\end{align*}
which implies $T(f+g)\leq T(f)+T(g)$. From this it follows that
\[
n(f+g)\leq \|T(f)+T(g)\|_{L_x^4}\leq n(f)+n(g).
\]
Also, we obviously have $n(c f)=|c|n(f)$ for all $c\in\C$.
Due to \eqref{Bilinear estimate} we have
\begin{equation}\label{eq:n-est}
\|n( e^{-it\la D\ra_m}f)\|_{L^4_t}
\ls \| \omega_k\ast \la e^{-it\la D\ra_m}f, e^{-it\la D\ra_m}f\ra_{\R^4} \|_{L_t^2L_x^2}^\frac12
\ls 2^{\frac k2} \|f\|_{L^2}
\end{equation}
for all  $f \in L^2(\R^3)$.  

Let
$\psi \in U^4_{\la D\ra_m}$ be with atomic decomposition
\[
\psi =\sum_{j} c_j a_j, \; \text{s.th. }\sum_{j} |c_j| \leq 2
\|\psi\|_{U^4_{\la D\ra_m}}, \text{ and }
U^4_{\la D\ra_m}\text{-atoms } a_j.
\]
We have
\begin{equation}\label{eq:n-est-u4}
\|n(\psi )\|_{L^4_t}\leq \sum_{j} |c_j| \|n(a_j)\|_{L^4_t} \ls
2^{\frac k2}\|\psi\|_{U^4_{\la D\ra_m}},
\end{equation}
provided that for any $U^4_{\la D\ra_m}$-atom $a$ the estimate
\[
\|n(a)\|_{L^4_t}\ls 2^{\frac k2}
\]
holds true. Indeed, let
$a(t)=\sum_k\mathbf{1}_{I_k}(t) e^{-it\la D\ra_m}\varphi_k$, for some partition
$(I_k)$ of $\R$ and $\varphi_k\in L^2(\R^3)$ satisfying
$\sum_{k}\|\varphi_k\|_{L^2}^4\leq 1$. Then,
\begin{align*}
\|n(a)\|_{L^4_t}\leq{}& \Big\|\sum_k\mathbf{1}_{I_k}(t)
n(e^{-it\la D\ra_m}\varphi_k)\Big\|_{L^4_t}
\leq{} \Big(\sum_k  \|n(e^{-it\la D\ra_m}\varphi_k)\|_{L^4_t}^4\Big)^{\frac14}\\
\ls{}& 2^{\frac k2}\Big(\sum_k
\|\varphi_k\|_{L^2}^4\Big)^{\frac14}
\ls{}2^{\frac k2},
\end{align*}
where we used \eqref{eq:n-est} in the third inequality, which
completes the proof of \eqref{eq:n-est-u4}. This implies
\begin{equation*}
\|\omega_k\ast \la P\psi,P\psi\ra_{\R^4} \|_{L^2(\R^{1+3})}
= \|n(\psi(t))\|_{L^4_t}^2\ls
2^k \| \psi \|_{U^4_{\la D\ra_m}}^2
\ls 2^k \| \psi \|_{V^2_{\la D\ra_m}}^2,
\end{equation*}
where we used $V^2_{\la D\ra_m}\hookrightarrow U^4_{\la D\ra_m}$. Hence the claim \eqref{eq:u4} is established.

{\it Step2:} Let $\phi_{j}:=P_{\lambda_j} \psi_j$, $j=1,2$. We may
assume $\|\phi_j\|_{V^2_{\mathbf{S}}}=1$. The functions
$\phi_\pm=\phi_1\pm \phi_2$ satisfy $\phi_\pm=P \phi_\pm$,
$\|\phi_\pm\|_{V^2 _{\mathbf{S}}}\ls 1$,
\begin{align*}
&\Real(\la \phi_1,\phi_2 \ra_{\R^4})
=\frac12 \Big( \la \Real\phi_+,\Real\phi_+ \ra_{\R^4} -\la \Real\phi_-,\Real\phi_- \ra_{\R^4} +\la \Imag\phi_+,\Imag\phi_+ \ra_{\R^4}-\la \Imag\phi_-,\Imag\phi_- \ra_{\R^4}\Big), \\
&\Imag(\la \phi_1,\phi_2 \ra_{\R^4})=\Real(-i\la \phi_1,\phi_2 \ra_{\R^4}).
\end{align*}
We have
\begin{align}\label{11}
\|P_{\leq k} \la \phi_1,\phi_2 \ra_{\R^4} \|_{L^2(\R^{1+3})}
\ls \|P_{\leq k} \Real \la \phi_1,\phi_2 \ra_{\R^4} \|_{L^2(\R^{1+3})}
+\|P_{\leq k} \Real \la -i\phi_1,\phi_2 \ra_{\R^4} \|_{L^2(\R^{1+3})}.
\end{align}
Since $P\Real\phi_{\pm}=\phi_{\pm}$ and $P\Imag\phi_{\pm}=\phi_{\pm}$, the  estimate \eqref{eq:u4} yields
\begin{align*}
\|P_{\leq k} \Real \la \phi_1,\phi_2 \ra_{\R^4} \|_{L^2(\R^{1+3})}
\ls{}& \|P_{\leq k} \la \Real\phi_+,\Real\phi_+ \ra_{\R^4} \|_{L^2(\R^{1+3})}
+ \|P_{\leq k} \la \Real\phi_-,\Real\phi_- \ra_{\R^4} \|_{L^2(\R^{1+3})} \\
&+\|P_{\leq k} \la \Imag\phi_+,\Imag\phi_+ \ra_{\R^4} \|_{L^2(\R^{1+3})}
+\|P_{\leq k} \la \Imag\phi_-,\Imag\phi_- \ra_{\R^4} \|_{L^2(\R^{1+3})} \\
\ls{}& 2^k \big( \|\Real \phi_+\|_{V^2_{\la D\ra_m}}^2
+\|\Real \phi_-\|_{V^2_{\la D\ra_m}}^2
+\|\Imag \phi_+\|_{V^2_{\la D\ra_m}}^2
+\|\Imag \phi_-\|_{V^2_{\la D\ra_m}}^2 \big)\\
\ls{}& 2^k\big( \|\phi_+\|_{V^2_{\la D\ra_m}}^2 +\|\phi_-\|_{V^2_{\la D\ra_m}}^2 \big) 
\ls 2^k.
\end{align*}
We can similarly bound the second term in \eqref{11} once we set $\widetilde{\phi}_{\pm}=-i\phi_1\pm\phi_2$.
Thus we finally obtain
\[
\|P_{\leq k} \la P_{k_1}^m \psi_1, P_{k_2}^m \psi_2\ra_{\R^4}  \|_{L^2(\R^{1+3})}
\ls 2^k
\|P_{k_1}^m \psi_1\|_{V^2_{\la D\ra_m}}\|P_{k_2}^m \psi_2\|_{V^2_{\la D\ra_m}},
\]
which completes the proof since $P_k= P_{\le k+1}-P_{\le k}$.
\end{proof}

\bibliographystyle{amsplain} \bibliography{DiracHartree}

\end{document}